\newtheorem{theorem}{Theorem}
\newtheorem{lemma}{Lemma}
\newtheorem{corollary}{Corollary}
\providecommand{\keywords}[1]{\textbf{\textit{Keywords---}} #1}
\newcommand{\bfb}{\bm{b}}
\newcommand{\bfe}{\bm{e}}
\newcommand{\bfx}{\bm{x}}
\newcommand{\bfphi}{\bm{\phi}}
\newcommand{\bfpsi}{\bm{\psi}}
\newcommand{\calR}{{\mathcal{R}}}
\newcommand{\bilintilbothnopar}[4]{ a_{#1 \, #2} \left( e_W^{\{#3 + #1/p\}},\phi^{[#4 + #2/p]} \right)}
\newcommand{\bilintilfirstnopar}[4]{ a_{#1 \, #2} \left( e_W^{\{#3 + #1/p\}},\phi^{[#4 + (#2)/p]} \right)}
\newcommand{\bilintilsecondnopar}[4]{ a_{#1 \, #2} \left( e_W^{\{#3 + (#1)/p\}},\phi^{[#4 + #2/p]} \right)}
\newcommand{\bilinhate}[3]{a_{#1}\left(e_D^{\{#2 + (#1)/p\}}, \phi^{[#3 + (#1)/p]}\right)}
\newcommand{\bilinhatenopar}[3]{a_{#1}\left(e_D^{\{#2 + #1/p\}}, \phi^{[#3 + #1/p]}\right)}
\title{\emph{A posteriori} error analysis for Schwarz overlapping domain decomposition methods}
\author{Jehanzeb Chaudhry, Don Estep and Simon Tavener}
\date{}
\begin{document}
\maketitle{}

\begin{abstract}
Domain decomposition methods are widely used for the numerical solution of partial differential equations on high performance computers. We develop an adjoint-based \emph{a posteriori} error analysis for both multiplicative and additive overlapping Schwarz domain decomposition methods. The numerical error in a user-specified functional of the solution (quantity of interest) is decomposed into contributions that arise as a result of the finite iteration between the subdomains and from the spatial discretization. The spatial discretization contribution is further decomposed into contributions arising from each subdomain. This decomposition of the numerical error is used to construct a two stage solution strategy that efficiently reduces the error in the quantity of interest by adjusting the relative contributions to the error.
\end{abstract}

\keywords{Schwarz overlapping domain decomposition, \emph{a posteriori} error estimation, adaptive computation}



\section{Introduction}
\label{sec:introduction}

We derive and implement an adjoint-based \emph{a posteriori} error analysis for overlapping domain decomposition methods for elliptic boundary value problems, examining both additive and multiplicative Schwarz algorithms. Domain decomposition methods (DDMs) arrive at the solution of a problem defined over a domain by combining the solutions of related problems posed on subdomains. The problems posed on subdomains often require less computational resources and some of the first uses of DDMs for practical applications were in low-memory or limited computation scenarios~\cite{Kron1953,Przemieniecki63}.
Recently DDMs have seen increased use in the context of distributed and parallel computing~\cite{Keyes:1995:DBP, Tarek2008, Smith:1996:DPM, Toselli:2004:DDM, Wohlmuth:1999:DMI}. In this article, we follow the presentation in \cite{Tarek2008}.

In overlapping DDMs, each subdomain has a non-empty intersection with at least one other subdomain and typically  state information is exchanged between the subdomains. The theoretical properties of the multiplicative Schwarz method and some of its variants were studied in~\cite{Lions:1988:SAM}. The variant of this method suitable for parallel computing, called the additive Schwarz method, was introduced in~\cite{Dryja:1987:AVS}.
Non-overlapping DDMs, in which the  subdomains have empty intersection and state and derivative information is exchanged through their common interfaces, is an alternative approach~\cite{lions1990schwarz}.



Adjoint-based \emph{a posteriori} error analysis for systems of ordinary and partial differential equations has an extensive history \cite{bangerth2013adaptive, Becker:01, eriksson1995introduction,  Estep:95, Giles:Suli:02, houston2002hp}, and has been applied to a wide range of applications and numerical methods~\cite{arbogast2014posteriori, arbogast2015posteriori, JBE18,  CEG+2015, JEG16,  Chaudhry2019, Chaudhry17, chaudhry2016posteriori, collins2014_explicit, collins2015_cutcell, collins2014_LaxWendroff, johansson2015adaptive}.
Adjoint-based \emph{a posteriori} error analysis classically considers a differential equation
\begin{equation}
\label{eq:diff_eq_generic}
  L(u (\mathbf{x}, t)) = g(\mathbf{x}, t),
\end{equation}
where $L$ denotes the differential operator, and a Quantity of Interest (QoI), expressed as a linear functional
\begin{equation}
\label{eq:QoI_generic}
  Q(u)=(u,\psi),
\end{equation}
where $(\cdot, \cdot)$ denotes the $L_2$ inner product and the function $\psi$ is chosen to yield the desired information. Given the numerical approximation $U$ to the analytical solution, the residual $R(U) = g - L(U)$ quantifies the effects of discretization on the evaluation of the differential equation, but it does not provide the error in the QoI, $(u-U,\psi)$. The relation between the residual and the error is derived from solving an adjoint problem.

For linear problems, the adjoint operator $\mathcal{L}^\ast : Y^\ast \rightarrow X^\ast$ of a linear operator $\mathcal{L}: X \rightarrow Y$ between Banach spaces $X,Y$  with dual spaces $X^\ast, Y^\ast$ is defined by the bilinear identity
\begin{equation}
\label{eq:bilin}
\langle \mathcal{L} x,y^\ast \rangle_Y = \langle x, \mathcal{L}^\ast y^\ast \rangle_X, \quad x \in X,   y^\ast \in Y^\ast,
\end{equation}
where  $\langle\cdot, \cdot \rangle_S$ denotes duality-pairing in the space $S \in \lbrace X, Y \rbrace$.
The adjoint problem associated with \eqref{eq:diff_eq_generic} is
\begin{equation}\label{formaladjoint}
 L^\ast \phi = \psi.
 \end{equation}
This yields the error estimate,
\begin{equation}
\label{eq:generic_err_eq_simple}
\text{Error in the QoI } = (u - U, \psi) = (R(U), \phi).
 \end{equation}
We estimate the numerical error in the quantity of interest by numerically solving the adjoint problem \eqref{formaladjoint}, computing the residual, and evaluating \eqref{eq:generic_err_eq_simple}. 

Classical \emph{a posteriori} error analysis for the numerical solution of differential equations assumes the use of fully implicit discretization methods in which the approximate solution is computed exactly for which the adjoint of the forward operator \eqref{formaladjoint} produces a useful adjoint solution. The adjoint of the discrete solution operator when implementing more complex, multistage solution methods is much more complicated to define.  If the steps in the solution process are written as compositions of operators, then the appropriate adjoint can typically be written as a composition of adjoints associated with various steps of discretization. The resulting error estimate must then use the appropriate adjoint to weight specific residuals and include additional terms quantifying the difference between this adjoint and the adjoint of the overall problem \eqref{formaladjoint}. The correct choice of adjoint and residuals also enables a decomposition of the total error into distinct sources of error, such as discretization, iteration, transfer, projection and quadrature errors. These concepts are illustrated in an analysis of iterative solvers for non-autonomous evolution problems in \cite{CEGT13b}, in an analysis of a multirate iterative solver for ordinary differential equations in \cite{ginting2012multirate}, and in an analysis of an iterative multi-discretization method for reaction-diffusion systems in \cite{CEG+2013}. An \emph{a posteriori} error analysis for non-overlapping DDM is carried out in~\cite{JBE18}. To the best of our knowledge, an \emph{a posteriori} error analysis for overlapping DDMs has not been performed.

Adjoint-based \emph{a posteriori} error estimates can provide useful information for designing efficient adaptive solution strategies. During the first ``pre-processing'' step (stage 1), a solution is computed on a relatively coarse discretization together with an accurate \emph{a posteriori} error estimate that quantifies the contributions of all sources of  error.  The information provided by the first stage is used to guide discretization choices  for a ``production level'' (stage 2) computation.
This strategy is described in earlier work on blockwise adaptivity \cite{carey2010blockwise, johansson2015adaptive} and in \cite{chaudhry2016posteriori}. 

We introduce the multiplicative and additive Schwarz overlapping domain decomposition methods in \S \ref{sec:Schwarz_DD}. We present the \emph{a posteriori} error analysis in \S \ref{sec:Schwarz_error_analysis}.
Examples are provided for multiplicative Schwarz in \S \ref{sec:mult_Schwarz_numerical_examples} and for additive Schwarz in \S \ref{sec:add_Schwarz_numerical_examples}. Details of the analysis appear in \S \ref{sec:proofs}. A discussion and future research directions appear in \S \ref{sec:discussion}.

\section{Overlapping Schwarz domain decomposition}
\label{sec:Schwarz_DD}

Assume that we have $p$ overlapping subdomains $\Omega_1, \cdots, \Omega_p$ on a domain $\Omega$, such that for any  $\Omega_i$, there exists a $\Omega_j$, $i \neq j$, for which $\Omega_i \cap \Omega_j \neq \emptyset$ and $\cup_i \Omega_i = \Omega$. We use $L_2(\Omega)$  to denote the space of square integrable functions, $H^1(\Omega)$ for functions having a $L_2(\Omega)$ derivative and  $H^1_0(\Omega)$ as the subspace of $H^1(\Omega)$ of functions satisfying homogeneous Dirichlet boundary conditions. We let $(\cdot, \cdot)$ and  $(\cdot, \cdot)_{ij}$ represent the $L_2(\Omega)$ and  $L_2(\Omega_i \cap \Omega_j)$ inner products respectively.

Consider the weak form of a second-order linear elliptic partial differential equation (PDE) problem: find $u \in H^1_0(\Omega)$ such that
\begin{equation}
\label{eq:abstract_weak_form}
	a(u,v) = l(v) \quad \forall v \in H^1_0(\Omega).
\end{equation}
Here $a(\cdot, \cdot)$ is the standard bilinear form over $\Omega$ arising from integration by parts of the PDE operator and $l(\cdot)$ is the linear functional arising from the right-hand-side of the PDE. For example, given the Poisson equation $-\nabla^2 u(x) =  f(x)$ with homogeneous Dirichlet boundary conditions, we have $a(u,v) = \int_{\Omega} \nabla u \cdot \nabla v \, dx$ and $l(v) = (f,v)$. We denote by  $a_i(\cdot, \cdot)$ the  restriction  of $a(\cdot, \cdot)$  to  $\Omega_i$ and $a_{ij} (\cdot, \cdot)$  the restriction of $a(\cdot, \cdot)$ to $\Omega_i \cap \Omega_j$. Similarly, we let $l_i(\cdot)$ be the restriction of $l(\cdot)$ to $\Omega_i$.

We are interested in a QoI which is a linear functional of the solution,
\begin{equation}
\label{eq:QoI_def}
Q(u) = (\psi,u),
\end{equation}
where $\psi \in L_2(\Omega)$.

\subsection{Multiplicative Schwarz overlapping domain decomposition}
\label{sec:multiplicative_Schwarz_algorithms}

Defining $ H^{1}_{D_{k}}(\Omega_i) \equiv \{v \in H^{1}(\Omega_i) |\ v = u^{\{k+(i-1)/p\}} \text{ on } \partial \Omega_i\}$, we present the multiplicative Schwarz method in Algorithm \ref{alg:multiplicative_basic}.

\begin{algorithm}[H]
\caption{Overlapping multiplicative Schwarz domain decomposition}
\label{alg:multiplicative_basic}
\begin{algorithmic}
\State Given $u^{\{0\}}$ defined on $\Omega$
\For{$k=0, 1, 2, \dots, K-1$ }
  \For{$i= 1, 2, \dots, p$ }
    \State Find $\widetilde{u}^{\{k+i/p\}} \in H^{1}_{D_{k}}(\Omega_i)$ such that \\ \qquad\qquad
    \begin{equation}
      \label{eq:multiplicative_basic_local}
	    a_i \big( \widetilde{u}^{\{k+i/p\}},v \big)= l_i(v),
            \quad \forall v \in H^{1}_{0}(\Omega_i).
    \end{equation}
    \State Let
    \begin{equation}
    \label{eq:multiplicative_basic_global}
    u^{\{k+i/p\}} =
      \left \{
      \begin{gathered} \begin{aligned}
      &\widetilde{u}^{\{k + i/p\}}, \; &&\hbox{on } \overline{\Omega}_i, \\
      &u^{\{k + (i-1)/p\}},            &&\hbox{on } \Omega \backslash \overline{\Omega}_i.
      \end{aligned} \end{gathered}
      \right .
      \end{equation}
  \EndFor
\EndFor
\end{algorithmic}
\end{algorithm}


\subsection{Additive Schwarz overlapping domain decomposition}
\label{sec:additive_Schwarz_algorithms}

%
%
%
The additive Schwarz solution method is given in Algorithm \ref{alg:additive_basic} with $H^{1}_{D_{k}}(\Omega_i) \equiv \{v \in H^{1}_D(\Omega_i) |\ v = u^{\{k\}} \text{ on } \partial \Omega_i\}$. The  Richardson parameter $\tau$, is needed to ensure that the iteration converges~\cite{Tarek2008}.

\begin{algorithm}[H]\caption{Overlapping additive Schwarz domain decomposition}
\label{alg:additive_basic}
\begin{algorithmic}
\State Given $u^{\{0\}}$ defined on $\Omega$
\For{$k=0, 1, 2, \dots, K-1$ }
\For{$i= 1, 2, \dots, p$ }
\State Find $\widetilde{u}_i^{\{k\}} \in H^{1}_{D_{k}}(\Omega_i)$ such that \\ \qquad\qquad
  \begin{equation} \label{eq:additive_basic_local}
    a_i \big( \widetilde{u}_i^{\{k+1\}},v \big)= l_i(v), \quad \forall v \in H^{1}_{0}(\Omega_i).
  \end{equation}
\State Let
  \begin{equation} \label{eq:additive_basic_global}
     u^{\{k+1\}} = (1 - \tau p)u^{\{k\}}
                 + \tau \left ( \sum_{i=1}^p  {\Pi_i} \widetilde{u}_i^{\{k+1\}} \right )
     \;\hbox{where}\quad
     \Pi_i \widetilde{u}_i^{\{k+1\}} = \left \{
    \begin{gathered} \begin{aligned}
      &\widetilde{u}_i^{\{k+1\}}, \; &&\hbox{on } \overline{\Omega}_i, \\
      &u^{\{k \}},                   &&\hbox{on } \Omega \backslash \overline{\Omega}_i.
    \end{aligned} \end{gathered}  \right .
  \end{equation}
%
\EndFor
\EndFor
\end{algorithmic}
\end{algorithm}


\subsection{Finite element discretizations}
\label{sec:finite_element_discretization}

We let $\mathcal{T}_h = \{T_m \}_{m=1}^{M}$ denote a quasi-regular triangulation of $\Omega$ in to non-overlapping elements $T_m$ such that no node of one element $T_i$ intersects an interior edge of another element $T_{j}$, and $\Omega = \cup_m T_m$. Moreover,  the triangulation is consistent with the domain decomposition  in the sense that if $T_i \cap \Omega_j \neq \emptyset$ then $T_i \subset \Omega_j$. The discretization of the overlapping domain decomposition approximation  substitutes finite dimensional spaces $V_{i,h}^{k}$ for $H^{1}_{D_{k}}(\Omega_i)$ and $V_{i,h,0}$ for $H^1_0(\Omega_i)$ in Algorithm~\ref{alg:multiplicative_basic},  where $V_{i,h}^{k}$ and $V_{i,h,0}$ refer to the standard finite element spaces consisting space of continuous piecewise polynomial functions on $\mathcal{T}_{h,i} = \mathcal{T}_h\vert_{\Omega_i}$. Additionally, $V_h \subset H^1_0(\Omega)$ is the  finite element space consisting  of continuous piecewise polynomial functions with respect to $\mathcal{T}_h$.

We represent the global discrete solutions as $U^{\{k+i/p\}}$ (resp. $U^{\{k\}}_i$) belonging to the space $V_{h}$ and the local discrete solutions as $\widetilde{U}^{\{k+i/p\}}$  (resp. $\widetilde{U}^{\{k\}}_i$)  belonging to the space $V_{i,h,0}$ for the multiplicative (resp. additive) Schwarz methods. For simplicity we assume that $U^{\{0\}} = u^{\{0\}} $. For both algorithms, the global continuum, (resp. discrete), solution after $k$ iterations is represented as $u^{\{k\}}$, (resp. $U^{\{k\}}$).

\section{\emph{A posteriori} analysis for the finite element approximation computed using Schwarz algorithms}
\label{sec:Schwarz_error_analysis}

We derive a representation formula for the error in the QoI, $Q(u) - Q(U^{\{K\}}) = \left(\psi,u - U^{\{K\}}\right)$,  that is computed from the discrete solution of the multiplicative or additive domain decomposition method  after $K$ iterations.





\subsection{The total error and its components}
We first consider the total numerical error, and then its decomposition into  discretization and iteration error components.

\subsubsection{The total error}
\label{sec:multiplicative_Schwarz_total_iter_errors}

We define the \emph{global} adjoint
\begin{equation}
\label{eq:global_adj}
	a(v,\phi) = (\psi,v), \quad \forall v \in H^1_0(\Omega).
\end{equation}

\begin{theorem}[Total error representation]
\label{thm:multiplicative_total_error}
The error in the QoI for the discretized multiplicative or additive Schwarz algorithm after $K$ iterations is given by
\begin{equation}
\label{eq:multiplicative_tot_err_rep}
\left(u - U^{\{K\}}, \psi\right) = R \left(U^{\{K\}}, \phi\right),
\end{equation}
where $R\left(U^{\{K\}}, \phi\right) = l(\phi) - a\left(U^{\{K\}}, \phi\right)$ is the weak residual.
\end{theorem}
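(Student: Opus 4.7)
The plan is to observe that this total error representation is, in essence, a direct consequence of Galerkin orthogonality's absence being harmless when we only seek the \emph{total} error (rather than its decomposition into iteration/discretization contributions, which is the subject of later results). All that is needed is (i) that the iterate $U^{\{K\}}$ lies in $H^1_0(\Omega)$ so that it is a legal test function for the global adjoint, and (ii) that $u$ solves the weak form \eqref{eq:abstract_weak_form}.

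First I would verify that $U^{\{K\}}\in H^1_0(\Omega)$ for both the multiplicative and additive Schwarz discrete iterates. For the multiplicative case, note that at each sub-step the local problem \eqref{eq:multiplicative_basic_local} is posed in $V_{i,h}^{k}\subset H^1_{D_k}(\Omega_i)$, so $\widetilde{U}^{\{k+i/p\}}$ agrees with the previous global iterate on $\partial\Omega_i$. The gluing prescription \eqref{eq:multiplicative_basic_global} therefore produces a function that is continuous across $\partial\Omega_i$, piecewise polynomial on $\mathcal{T}_h$ (using the mesh-compatibility assumption in \S\ref{sec:finite_element_discretization}), and vanishes on $\partial\Omega$, hence lies in $V_h\subset H^1_0(\Omega)$. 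Induction on $k$ and $i$, starting from $U^{\{0\}}=u^{\{0\}}\in H^1_0(\Omega)$, gives $U^{\{K\}}\in H^1_0(\Omega)$. For the additive case the argument is the same, except one also uses that the convex combination in \eqref{eq:additive_basic_global} of functions in $H^1_0(\Omega)$ remains in $H^1_0(\Omega)$.

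Once this membership is in hand, the error representation follows in three lines. Since $u-U^{\{K\}}\in H^1_0(\Omega)$, it is a valid test function in the global adjoint problem \eqref{eq:global_adj}, giving
\begin{equation*}
\bigl(\psi,\, u-U^{\{K\}}\bigr) \;=\; a\bigl(u-U^{\{K\}},\, \phi\bigr).
\end{equation*}
Linearity in the first argument and the weak form \eqref{eq:abstract_weak_form} tested with $\phi\in H^1_0(\Omega)$ yield
\begin{equation*}
a\bigl(u-U^{\{K\}},\phi\bigr) \;=\; a(u,\phi)-a\bigl(U^{\{K\}},\phi\bigr) \;=\; l(\phi)-a\bigl(U^{\{K\}},\phi\bigr) \;=\; R\bigl(U^{\{K\}},\phi\bigr),
\end{equation*}
which is \eqref{eq:multiplicative_tot_err_rep}.

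The main (and essentially only) obstacle is the bookkeeping needed to confirm that the patched iterates really do belong to $H^1_0(\Omega)$; in particular, one must use the mesh-alignment hypothesis ``if $T_i\cap\Omega_j\neq\emptyset$ then $T_i\subset\Omega_j$'' to ensure the piecewise-defined function in \eqref{eq:multiplicative_basic_global} is still piecewise polynomial on $\mathcal{T}_h$ and therefore in $V_h$. Once that is granted, the theorem is just a one-line application of the adjoint identity and carries no information specific to Schwarz; the substantive error decomposition into iteration and per-subdomain discretization contributions is deferred to the subsequent results.
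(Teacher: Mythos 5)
Your proof is correct and is precisely the standard argument the paper invokes (it cites Eriksson et al.\ rather than writing it out): test the global adjoint problem with $u-U^{\{K\}}\in H^1_0(\Omega)$, use linearity and the weak form $a(u,\phi)=l(\phi)$, and identify the residual. Your extra bookkeeping confirming $U^{\{K\}}\in V_h\subset H^1_0(\Omega)$ is a harmless elaboration of what the paper simply asserts in its discretization setup.
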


The proof of Theorem \ref{thm:multiplicative_total_error} is standard, see e.g., \cite{eriksson1995introduction}. Unfortunately, it does not capture the structure of the differential operator corresponding to the Schwarz domain decomposition, which is reflected in the lack of Galerkin orthogonality in the expression. Performing Schwarz domain decomposition with a finite number of iterations defines a differential operator which is different than the differential operator associated with the original PDE~\eqref{eq:abstract_weak_form}. The numerical solution $U^{\{K\}}$ is a solution to the discretization of this modified operator.  We carry out an analysis by decomposing the error into two contributions: iterative and discretization errors. For implementation purposes we note that the global adjoint $\phi$ is solved using a higher order finite element scheme. The global adjoint may be approximated by a Schwarz domain decomposition method provided sufficient iterations are performed, or the overlap is sufficiently large that the iteration error is negligible.

\subsubsection{Discretization and iteration errors}
\label{sec:multiplicative_Schwarz_disc_iter_errors}

We decompose the total error as
\begin{equation}
\label{eq:error_decomposition}
u - U^{\{K\}}
= \underbrace{ u - u^{\{K\}}}_{\text{Iteration Error}}
+ \underbrace{  u^{\{K\}} -  U^{\{K\}}}_{\text{Discretization Error}}
= e_I^{\{K\}} + e_D^{\{K\}},
\end{equation}
where  $e_I^{\{k\}} = u - u^{\{k\}}$, $e_D^{\{k\}} = u^{\{k\}} - U^{\{k\}}$ and  $e_D^{\{0\}} = 0$. The iteration error quantifies the error due to the discrepancy between the PDE differential operator and the modified differential operator in the Schwarz algorithms arising from using a finite number ($K$) iterations. The discretization error arises from the discretization of  the modified differential operator.


\begin{theorem}[Iteration error representation]
We have
\begin{equation}
\label{eq:multiplicative_it_err_rep}
\left(u - u^{\{K\}}, \psi\right) = R \left(U^{\{K\}}, \phi\right)
   - \left(\psi, u^{\{K\}} - U^{\{K\}}\right).
\end{equation}
\end{theorem}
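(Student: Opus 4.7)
The plan is to derive the iteration error representation directly from the total error representation in Theorem \ref{thm:multiplicative_total_error} together with the additive decomposition \eqref{eq:error_decomposition}. The idea is that once we have a formula for the total error $(u - U^{\{K\}}, \psi)$ and a split of $u - U^{\{K\}}$ into an iteration part and a discretization part, the iteration part is recovered simply by subtracting off the discretization contribution measured against $\psi$.

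First I would take the inner product of the pointwise identity $u - U^{\{K\}} = e_I^{\{K\}} + e_D^{\{K\}}$ with $\psi$, using the bilinearity of the $L_2$ inner product to write
\begin{equation*}
(u - U^{\{K\}}, \psi) = (u - u^{\{K\}}, \psi) + (u^{\{K\}} - U^{\{K\}}, \psi).
\end{equation*}
Next I would apply Theorem \ref{thm:multiplicative_total_error} to replace the left-hand side by $R(U^{\{K\}}, \phi)$, yielding
\begin{equation*}
R(U^{\{K\}}, \phi) = (u - u^{\{K\}}, \psi) + (u^{\{K\}} - U^{\{K\}}, \psi).
\end{equation*}
Finally, I would solve for $(u - u^{\{K\}}, \psi)$ and use the symmetry of the $L_2$ inner product to obtain exactly \eqref{eq:multiplicative_it_err_rep}.

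There is essentially no obstacle here: the statement is an algebraic rearrangement that relies only on (i) linearity of the inner product, (ii) the definition of the error decomposition in \eqref{eq:error_decomposition}, and (iii) the already-cited Theorem \ref{thm:multiplicative_total_error}. The only thing worth remarking on is that the statement gives the iteration error a computable expression: the term $R(U^{\{K\}}, \phi)$ is the global-adjoint-weighted weak residual, while $(\psi, u^{\{K\}} - U^{\{K\}})$ is the QoI evaluated at the discretization error, which will be the object of the discretization-error representation proved subsequently. Thus the real content of the result is not the proof but the fact that, in combination with a separate representation for $e_D^{\{K\}}$, it cleanly isolates the effect of running only $K$ Schwarz iterations from the effect of replacing each continuum subdomain solve by its finite element counterpart.
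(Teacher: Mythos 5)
Your proposal is correct and is exactly the paper's argument: the paper's proof consists of the single line ``This follows by combining \eqref{eq:multiplicative_tot_err_rep} and \eqref{eq:error_decomposition},'' which is precisely the rearrangement you carry out. Nothing is missing; you have simply written out the algebra the authors leave implicit.
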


\begin{proof}
This follows by combining \eqref{eq:multiplicative_tot_err_rep} and \eqref{eq:error_decomposition}.
\end{proof}

The analysis involves partitioning of the QoI data over subdomains by a partition of unity. Similar ideas were used in \cite{EHL2005}. Let $\{\chi_i \}_{i=1}^{p}$ be a partition of unity such that
\begin{equation}
\label{eq:pou}
\psi_i=\chi_i \psi,
\end{equation}
and $\psi_i=0$ on $\Omega \backslash \Omega_i$. The partition of unity localizes the QoI data to the subdomains.  Let $d_i(x)$ denote the distance function
\begin{equation}
	d_i(x) = \left \{ \begin{aligned}
\mathrm{dist}(x, B^{(i)}), &\text{ if } x \in \overline{\Omega}_i\\
0, &\text{ if } x \notin \overline{\Omega}_i,
	\end{aligned}
	\right.
\end{equation}
where $B^{(i)} \equiv  (\partial \Omega_i \cap \Omega)$. Then set
\begin{equation}
	\chi_i(x) = \frac{d_i(x)}{\sum_{j=1}^{p} d_j(x)} \,.
\end{equation}


\bigskip
With the partition of the QoI data, we have the following partition of the QoI.
\begin{lemma}[Partitioning the QoI data over subdomains]
\label{lem:partition of unity}
We have
\begin{equation}
\label{eq:multiplicative_err_pou_decomp}
\left( e_D^{\{k\}},\psi \right)
= \sum_{i=1}^p \left( e_D^{\{k\}},\psi_i \right)_{ii}.
\end{equation}

\end{lemma}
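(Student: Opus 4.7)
The plan is to prove this by exploiting two simple facts: the partition-of-unity property of the $\chi_i$, and the support condition on the $\psi_i$. Neither requires any of the Schwarz-specific machinery; the lemma is essentially a bookkeeping identity about how the QoI data is split across overlapping subdomains.

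First I would observe that by construction $\chi_i(x) = d_i(x)/\sum_j d_j(x)$ gives $\sum_{i=1}^p \chi_i(x) = 1$ almost everywhere on $\Omega$ (the only failure set consists of points where every $d_j$ vanishes, which has measure zero since $\cup_i \Omega_i = \Omega$ and $d_j$ is positive on the interior of each $\overline{\Omega}_j$ away from $B^{(j)}$). Consequently $\psi = \sum_{i=1}^p \chi_i \psi = \sum_{i=1}^p \psi_i$ a.e.\ on $\Omega$. Then by linearity of the $L_2(\Omega)$ inner product,
\begin{equation*}
\bigl(e_D^{\{k\}}, \psi\bigr) = \Bigl(e_D^{\{k\}}, \sum_{i=1}^p \psi_i \Bigr) = \sum_{i=1}^p \bigl(e_D^{\{k\}}, \psi_i\bigr).
\end{equation*}

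Next I would use the support condition $\psi_i = 0$ on $\Omega \setminus \Omega_i$ to collapse each full-domain integral to its subdomain: for each $i$,
\begin{equation*}
\bigl(e_D^{\{k\}}, \psi_i\bigr) = \int_{\Omega} e_D^{\{k\}} \psi_i \, dx = \int_{\Omega_i} e_D^{\{k\}} \psi_i \, dx = \bigl(e_D^{\{k\}}, \psi_i\bigr)_{ii},
\end{equation*}
recalling that in the notation of the paper $(\cdot,\cdot)_{ii}$ denotes the $L_2(\Omega_i \cap \Omega_i) = L_2(\Omega_i)$ inner product. Summing over $i$ yields \eqref{eq:multiplicative_err_pou_decomp}.

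There is no substantive obstacle here; the only mild subtlety is justifying $\sum_i \chi_i = 1$ on a set of full measure despite the vanishing of $d_i$ on the internal boundary $B^{(i)}$, which is harmless because those exceptional sets are lower dimensional. Everything else is linearity of integration and use of support.
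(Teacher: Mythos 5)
Your proposal is correct and follows essentially the same route as the paper's own proof: expand $\psi = \sum_i \chi_i \psi$ by the partition of unity, use linearity, and invoke the support of $\psi_i$ to localize each term to $\Omega_i$. The paper compresses all of this into one displayed line, so your version merely spells out the same steps in more detail.
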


\begin{proof}
This follows directly from the definition of the partition of unity in \eqref{eq:pou},
\[
\left( e_D^{\{k\}},\psi \right)
= \left( e_D^{\{k\}},\sum_{i=1}^p \chi_i \psi \right)	
= \sum_{i=1}^p \left( e_D^{\{k\}},\psi_i \right)_{ii}.
\]
\end{proof}

\subsubsection{Weak Residuals}
 We define the  subdomain weak residual for a function $s$,
\begin{equation}
\label{eq:weak_resid_subdomain}
	R_i(s,v) = l_i(v) - a_i(s,v),
\end{equation}
for $i = 1,2, \ldots, p$.

\subsection{A posteriori error analysis of discretization error for multiplicative Schwarz}

In this section, we derive a representation of the discretization error, $\left(\psi, u^{\{K\}} - U^{\{K\}}\right)$, for the multiplicative Schwarz method.


\subsubsection{Adjoint problems}
\label{sec:multiplicative_Schwarz_adjoint_problems}

Define solutions $\phi^{[k+i/p]} \in H^{1}_0(\Omega_i)$ of the adjoint problems,
\begin{equation}
\begin{cases}
a_p \left( v,\phi^{[Q + i/p]} \right)
= \tau_{p}^{Q} (v),  \quad \forall v \in H^{1}_0(\Omega_p),\\
a_i\left( v,\phi^{[Q + i/p]} \right)
= \tau_{i}^{Q} (v)
- \sum_{j=i+1}^{p} a_{ij} \left(v, \phi^{[Q + j/p]} \right), \quad 1 \leq i \leq p, \quad \forall v \in H^{1}_0(\Omega_i),
\end{cases}	
\label{eq:multiplicative_adj}
\end{equation}
where
\begin{equation}
\label{eq:def_tau_i_Q}
\tau_{i}^{Q} (v) = \begin{cases}
\displaystyle \sum_{j=1}^p (v, \psi_j)_{i j}, \qquad Q = K-1,\\
\displaystyle -\sum_{j=1}^{i-1} a_{ij} \left( v, \phi^{[Q+1 + j/p]} \right),
\qquad 0 \leq Q < K-1.
\end{cases}	
\end{equation}

The right hand side of \eqref{eq:multiplicative_adj} captures not only the residuals corresponding to the localized QoI data (in the form of $\left(v, \psi_j\right)_{i j}$), but also the \textit{transfer} error between subdomains as the iteration proceeds (in the form of $- \sum_{j=1}^{i-1} a_{ij}\left(v, \phi^{[Q+1 + j/p]}\right) - \sum_{j=i+1}^{p}a_{ij}\left(v, \phi^{[Q + j/p]}\right)$). The adjoint problems \eqref{eq:multiplicative_adj} have the same sequential nature of subdomains solves as the multiplicative Schwarz  Algorithm~\ref{alg:multiplicative_basic}, but note that these are defined backwards from $K, \ K-1+(p-1)/p, \ K-1+(p-2)/p, \cdots, 1$.

\subsubsection{Discretization error}
\label{sec:multiplicative_Schwarz_theorem}

\begin{theorem}[Discretization error for multiplicative Schwarz]
\label{thm:multiplicative_discretization_error}
We have
\begin{equation}
\label{eq:multiplicative_disc_err_rep}
\left(\psi, u^{\{K\}} - U^{\{K\}}\right) = \sum_{k=0}^{K-1} \sum_{i=1}^p  R_i\left(\widetilde{U}^{\{k+i/p\}}, \phi^{[k+i/p]} - \pi_i \phi^{[k+i/p]}\right),
\end{equation}
where $\pi_i v$ is the approximation of $v \in H^1_0(\Omega_i)$ in $V_{i,h,0}$.
\end{theorem}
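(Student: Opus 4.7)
The plan is to transform the right-hand side of \eqref{eq:multiplicative_disc_err_rep} into $(\psi, e_D^{\{K\}})$ via the adjoint identities. The first step is a straightforward reduction: the local Galerkin orthogonality $R_i(\widetilde U^{\{k+i/p\}}, V) = 0$ for $V \in V_{i,h,0}$ allows me to drop the interpolant $\pi_i \phi^{[k+i/p]}$ inside each subdomain residual without changing the sum, and combined with the continuum local weak form \eqref{eq:multiplicative_basic_local} applied to the test function $\phi^{[k+i/p]} \in H^{1}_0(\Omega_i)$, this converts the right-hand side into $\sum_{k,i} a_i(\widetilde e_D^{\{k+i/p\}}, \phi^{[k+i/p]})$, where $\widetilde e_D^{\{k+i/p\}} := \widetilde u^{\{k+i/p\}} - \widetilde U^{\{k+i/p\}}$. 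Because $\phi^{[k+i/p]}$ is supported in $\overline{\Omega}_i$ and $\widetilde e_D^{\{k+i/p\}}$ agrees with $e_D^{\{k+i/p\}}$ on $\Omega_i$ by the global update \eqref{eq:multiplicative_basic_global}, this sum equivalently equals $\sum_{k,i} a_i(e_D^{\{k+i/p\}}, \phi^{[k+i/p]})$.

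The second step introduces the increment $\delta^{\{k+i/p\}} := e_D^{\{k+i/p\}} - e_D^{\{k+(i-1)/p\}}$. I would verify that $\delta^{\{k+i/p\}} \in H^{1}_0(\Omega_i)$: it vanishes outside $\overline{\Omega}_i$ by \eqref{eq:multiplicative_basic_global}, and the Dirichlet boundary data of both $\widetilde u^{\{k+i/p\}}$ and $\widetilde U^{\{k+i/p\}}$ agree with the previous iterate on $\partial \Omega_i$, so $\delta^{\{k+i/p\}}$ also vanishes there. This makes $\delta^{\{k+i/p\}}$ admissible as a test function in the adjoint problem \eqref{eq:multiplicative_adj}. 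Splitting $e_D^{\{k+i/p\}}|_{\Omega_i} = \delta^{\{k+i/p\}} + e_D^{\{k+(i-1)/p\}}|_{\Omega_i}$, expanding the second piece telescopically as a sum of earlier increments $\delta^{\{k'+i'/p\}}$ supported in $\Omega_i \cap \Omega_{i'}$, and substituting the adjoint identity for $a_i(\delta^{\{k+i/p\}}, \phi^{[k+i/p]})$, produces a combination of $\tau_i^k$ contributions (partition-of-unity terms when $k = K-1$, cross-iteration transfer terms when $k < K-1$) and cross-subdomain overlap integrals $a_{ij}$.

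The main obstacle---the technical heart of the proof---is a careful double-indexed summation-by-parts on $(k, i)$ in which every cross-subdomain transfer term is matched and cancelled by an overlap integral of the same functional form. Specifically, the same-iteration transfer terms $a_{ij}(\delta^{\{k+i/p\}}, \phi^{[k+j/p]})$ for $j > i$ arising from the adjoint identity pair with same-iteration overlap contributions from the $e_D^{\{k+(i-1)/p\}}$ expansion, and the cross-iteration transfer terms $a_{ij}(\delta, \phi^{[(k+1)+j/p]})$ for $j < i$ packaged in $\tau_i^k$ via \eqref{eq:def_tau_i_Q} for $k < K-1$ pair with the corresponding cross-iteration overlap contributions. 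The backward sequential construction of the adjoint system \eqref{eq:multiplicative_adj} is specifically engineered to make this cancellation exact. What survives is a telescoping collection of partition-of-unity contributions that, by the identity $\sum_{k,i} \delta^{\{k+i/p\}} = e_D^{\{K\}}$ and Lemma \ref{lem:partition of unity}, collapses to $(\psi, e_D^{\{K\}})$, establishing \eqref{eq:multiplicative_disc_err_rep}.
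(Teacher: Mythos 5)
Your outline is, step for step, the paper's own argument run in the opposite direction: your increment $\delta^{\{k+i/p\}} = e_D^{\{k+i/p\}} - e_D^{\{k+(i-1)/p\}}$ is exactly the quantity the paper calls $e_W^{\{k+i/p\}}$ (obtained there from the lifting $\widetilde{u}^{\{k+i/p\}} = w^{\{k+i/p\}} + u^{\{k+(i-1)/p\}}$, see \eqref{eq:multiplicative_err_tilde_hat_rel}); your opening reduction of $R_i(\widetilde{U}^{\{k+i/p\}}, \phi^{[k+i/p]} - \pi_i\phi^{[k+i/p]})$ to $a_i(e_D^{\{k+i/p\}}, \phi^{[k+i/p]})$ is precisely \eqref{eq:main_thm_last} together with \eqref{eq:multiplicative_basic_local_discrete} read backwards; and your final collapse via $\sum_{k,i}\delta^{\{k+i/p\}} = e_D^{\{K\}}$ and the partition of unity is Lemma~\ref{lem:multiplicative_disc_err_decomp}. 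So the strategy is sound and correctly identified.

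The genuine gap is that the step you yourself label ``the technical heart'' --- the double-indexed summation by parts in $(k,i)$ in which every within-iteration transfer term $a_{ij}(\cdot,\phi^{[k+j/p]})$, $j>i$, and every between-iteration transfer term $a_{ij}(\cdot,\phi^{[(k+1)+j/p]})$, $j<i$, is matched and cancelled --- is asserted rather than carried out, and the assertion that the adjoint system ``is specifically engineered to make this cancellation exact'' is not a proof: one must verify that the particular right-hand sides \eqref{eq:multiplicative_adj}--\eqref{eq:def_tau_i_Q} actually produce term-by-term cancellation, and this is where essentially all of the work lies. In the paper this occupies Lemma~\ref{lem:multiplicative_etlil_expansion} (the telescoping expansion of $a_i(e_D^{\{k+(i-1)/p\}},v)$ into overlap integrals of earlier increments), Lemma~\ref{lem:multiplicative_etil_sum} (its summed form over $k=0,\dots,Q$), Lemma~\ref{lem:mul_sch_ana_lem_5} (combining that with one adjoint equation), Lemma~\ref{lem:ms_eK_a} (a backward induction over the subdomain index $M$ peeling off one subdomain at a time), and Corollary~\ref{corr:1} (the resulting recursion in the iteration index $Q$, which is then unrolled $K$ times). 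The bookkeeping is delicate --- note in particular the asymmetric ranges $\sum_{k=0}^{Q}\sum_{r=1}^{i-1}$ versus $\sum_{k=0}^{Q-1}\sum_{r=i+1}^{p}$ in \eqref{eq:multiplicative_etil_sum}, which must align exactly with the split of the adjoint data between $\tau_i^Q$ and the $j>i$ sum in \eqref{eq:multiplicative_adj} --- so until this nested induction is actually written down and checked, the proof is a correct plan, not a proof.
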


The proof of Theorem~\ref{thm:multiplicative_discretization_error} is presented as a sequence of lemmas in \S \ref{sec:multiplicative_Schwarz_error_analysis}. The  term $\left( \phi^{[k+i/p]} - \pi_i \phi^{[k+i/p]} \right)$ arises from the use of Galerkin orthogonality, or the fact that the residual of the discrete solution is zero on the finite dimensional space $V_{i,h,0}$. This reflects the fact that $\widetilde{U}^{\{k+i/p\}}$ is the discrete approximation to $u^{\{k+i/p\}}$, not to $u$.

\subsection{\emph{A posteriori} analysis of the discretization error for additive Schwarz}

In this section, we derive representation of the discretization error in the QoI obtained from the additive Schwarz method.


\subsubsection{Adjoint problems for discretization error}
\label{sec:additive_Schwarz_adjoint_problems}

Define  $\phi^{[k]}_i \in H^{1}_0(\Omega_i)$ solutions to the adjoint problems,
\begin{equation}\label{eq:additive_adjoints}
a_{i}\left(v, \phi_i^{[k]}\right)
= \tau \sum_{j=1}^p \left\{ (\psi_j,v)_{ij} - a_{ij}\left(v, \sum_{l=k+1}^K \phi_j^{[l]} \right) \right \}, \quad \forall v \in H^{1}_0(\Omega_i).
\end{equation}

For a fixed $k$, the adjoint problems   \eqref{eq:additive_adjoints} are independent for each $i$, so $\phi_i^{[k]}$ may be computed backwards from $K, \ K-1, \ K-2, \cdots, 1$ in parallel analogous to the solution strategy in the additive Schwarz Algorithm \ref{alg:additive_basic}. We also note that for implementation purposes $\sum_{l=k+1}^K \phi_j^{[l]}$ involves a sum of the  vectors, $\sum_{l=k+2}^K \phi_j^{[l]}$ (computed earlier) and  $\phi_j^{[k+1]}$.

\subsubsection{Discretization error}
\label{sec:additive_Schwarz_theorem}


\begin{theorem}[Discretization error for additive Schwarz]
\label{thm:additive_discretization_error}
We have
\begin{equation}
\label{eq:additive_disc_err_rep}
\left(\psi, e_D^{\{K\}}\right) = \left(\psi, u^{\{K\}} - U^{\{K\}}\right)	 = \sum_{k=1}^{K} \sum_{i=1}^p  R_i\left(\widetilde{U}^{\{k\}}_i, \phi^{[k]}_i - \pi_i \phi^{[k]}_i\right).
\end{equation}
\end{theorem}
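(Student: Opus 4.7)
The plan is to reduce via Galerkin orthogonality and then close the identity using a telescoping identity derived from the additive update together with the adjoint equation. First I would observe that, since $\pi_i \phi_i^{[k]} \in V_{i,h,0}$ and $\widetilde{U}_i^{\{k\}}$ is a Galerkin solution on $V_{i,h,0}$, the quantity $R_i(\widetilde{U}_i^{\{k\}}, \pi_i \phi_i^{[k]})$ vanishes, so it suffices to show
\begin{equation*}
\left(\psi, e_D^{\{K\}}\right) = \sum_{k=1}^K \sum_{i=1}^p R_i\!\left(\widetilde{U}_i^{\{k\}}, \phi_i^{[k]}\right).
\end{equation*}
Using the continuum weak form satisfied by $\widetilde{u}_i^{\{k\}}$ together with $\phi_i^{[k]} \in H^1_0(\Omega_i)$, each such residual rewrites as $a_i(\Delta_i^{\{k\}}, \phi_i^{[k]})$ where $\Delta_i^{\{k\}} := \widetilde{u}_i^{\{k\}} - \widetilde{U}_i^{\{k\}}$; note that $\Delta_i^{\{k\}}$ carries boundary data $e_D^{\{k-1\}}$ on $\partial \Omega_i \cap \Omega$ and so is not itself admissible in \eqref{eq:additive_adjoints}.

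Second, I would subtract the continuum and discrete versions of \eqref{eq:additive_basic_global} via the pointwise decomposition $\Pi_i \widetilde{u}_i^{\{k\}} - \Pi_i \widetilde{U}_i^{\{k\}} = e_D^{\{k-1\}} + \widetilde{v}_i^{\{k\}}$, where $\widetilde{v}_i^{\{k\}}$ is the zero extension to $\Omega$ of $v_i^{\{k\}} := \Delta_i^{\{k\}} - e_D^{\{k-1\}}\big|_{\Omega_i}$. The $(1-\tau p)$ and $\tau p$ contributions of $e_D^{\{k-1\}}$ then cancel, producing the clean one-step identity $e_D^{\{k\}} - e_D^{\{k-1\}} = \tau \sum_i \widetilde{v}_i^{\{k\}}$. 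The crucial feature is that the boundary traces of $\Delta_i^{\{k\}}$ and $e_D^{\{k-1\}}\big|_{\Omega_i}$ agree on $\partial \Omega_i$, so $v_i^{\{k\}} \in H^1_0(\Omega_i)$ is admissible in \eqref{eq:additive_adjoints}. Telescoping from $e_D^{\{0\}}=0$ and invoking Lemma~\ref{lem:partition of unity} gives
\begin{equation*}
\left(\psi, e_D^{\{K\}}\right) = \sum_{k=1}^K \tau \sum_{i,j=1}^p (\psi_j, v_i^{\{k\}})_{ij},
\end{equation*}
and applying \eqref{eq:additive_adjoints} with test $v = v_i^{\{k\}}$ for each $(k,i)$ converts the right-hand side into $\sum_{k,i} a_i(v_i^{\{k\}}, \phi_i^{[k]}) + \tau \sum_{k,i,j} a_{ij}(v_i^{\{k\}}, S_j^{[k+1]})$, with $S_j^{[k+1]} := \sum_{l=k+1}^K \phi_j^{[l]}$.

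The main obstacle, and the heart of the argument, is to match the transfer sum $\tau \sum_{k,i,j} a_{ij}(v_i^{\{k\}}, S_j^{[k+1]})$ with $\sum_{k,i} a_i(e_D^{\{k-1\}}, \phi_i^{[k]})$; once this is established, the splitting $\Delta_i^{\{k\}} = v_i^{\{k\}} + e_D^{\{k-1\}}\big|_{\Omega_i}$ lets the two pieces recombine into $\sum_{k,i} a_i(\Delta_i^{\{k\}}, \phi_i^{[k]})$, which is exactly the target identity. I would carry out this matching by substituting the cumulative form $e_D^{\{k-1\}} = \tau \sum_{m=1}^{k-1} \sum_l \widetilde{v}_l^{\{m\}}$ obtained from iterating the one-step identity, using that $\widetilde{v}_l^{\{m\}}$ is supported in $\Omega_l$ to replace $a_i(\widetilde{v}_l^{\{m\}}, \phi_i^{[k]})$ by $a_{il}(v_l^{\{m\}}, \phi_i^{[k]})$, interchanging the $(k,m)$ summations to expose $S_i^{[m+1]} = \sum_{k>m} \phi_i^{[k]}$, and invoking the symmetry $a_{il} = a_{li}$ of the intersection form. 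A final relabeling of indices then identifies the result with the transfer sum, closing the proof.
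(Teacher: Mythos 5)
Your proposal is correct and follows essentially the same route as the paper's own proof: your $v_i^{\{k\}}$ is exactly the paper's lifted error $e_{W,i}^{\{k\}}$, your one-step telescoping identity and partition-of-unity step reproduce Lemma~\ref{lem:additive_disc_err_decomp}, your substitution of the cumulative form of $e_D^{\{k-1\}}$ reproduces Lemmas~\ref{lem:additive_disc_err_unrolling} and~\ref{lem:add_sch_err_ana_1}, and your interchange of the $(k,m)$ summations is the paper's row-versus-column argument for \eqref{eq:two_sums}. The only difference is organizational (you apply Galerkin orthogonality at the outset rather than at the end), which does not change the substance.
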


The proof of Theorem~\ref{thm:additive_discretization_error} is presented as a sequence of lemmas in \S \ref{sec:additive_Schwarz_error_analysis}.


\subsection{Solution algorithms}
\label{sec:Schwarz_error_estimation_algorithm}

The full algorithm for \emph{a posteriori} error estimation for overlapping multiplicative/additive Schwarz domain decomposition is provided in Algorithm \ref{alg:error_estimation}.

\begin{algorithm}[H]
\caption{Adjoint-based \emph{a posteriori} error estimation for overlapping multiplicative DDMs}
\label{alg:error_estimation}
\begin{algorithmic}

\For{$k=0, 1, 2, \dots, K-1$ }
   \For{$i=1, 2, \dots, p$ }
     \State Solve primal problem on subdomain $i$   \hfill (see \eqref{eq:multiplicative_basic_local}/ \eqref{eq:additive_basic_local})
     \State Combine to construct a global solution  \hfill (see \eqref{eq:multiplicative_basic_global}/ \eqref{eq:additive_basic_global})
  \EndFor
\EndFor

\For{$k=K-1, K-2, \dots, 0$ }%
   \For{$i=p, p-1, \dots, 1$ }
    \State Approximate solution of adjoint problem on subdomain $i$    \hfill (see \eqref{eq:multiplicative_adj}/ \eqref{eq:additive_adjoints})
    \State Compute adjoint weighted residuals and accumulate error contributions \hfill (see \eqref{eq:multiplicative_disc_err_rep}/ \eqref{eq:additive_disc_err_rep})
  \EndFor
\EndFor

\State Approximate solution of  global adjoint problem  \hfill (see \eqref{eq:global_adj})
\State Estimate total error  \hfill (see \eqref{eq:multiplicative_tot_err_rep})
\State Estimate iteration error \hfill (see \eqref{eq:multiplicative_it_err_rep})

\end{algorithmic}
\end{algorithm}


\section{Numerical examples for multiplicative Schwarz}
\label{sec:mult_Schwarz_numerical_examples}

We provide examples for both multiplicative and additive Schwarz in order to demonstrate the accuracy of the \emph{a posteriori} error estimate for a range of scenarios, stressing the importance of the ability to  distinguish the contributions from discretization and iteration. The initial examples in \S \ref{sec:mult_Schwarz_examples_poisson} are chosen to illustrate certain expected behaviors. We expect the discretization error to decrease as the mesh is  refined, and the iteration error to decrease as we increase the number of iterations or the degree of overlap. We expect the discretization error to be  constant if  the mesh is fixed when the number of subdomains is  increased, but expect the iteration error to increase. In other words, the discretization error is determined by the mesh, but the iteration error is determined by the number (and disposition) of subdomains, the degree of overlap, and the number of iterations. In \S \ref{sec:mult_Schwarz_examples_cancellation}, we construct a problem where the discretization and iteration errors have opposite signs, and show that iterating with a fixed mesh may result in the overall error initially decreasing as the iteration error decreases, achieving a minimum when the discretization and iteration errors cancel each other, and then increasing (and stabilizing) as the discretization error comes to dominate the total error. For the convection dominated problem in \S \ref{sec:mult_Schwarz_examples_convection_diffusion}, we show how the configuration of the subdomains affects the iteration error, but not the discretization error. Finally in \S \ref{sec:mult_Schwarz_example_two_stage_strategy} we provide two examples of a two stage strategy in which an accurate error estimate for an initial coarse solution guides the construction of a more accurate ``production'' calculation. We choose to locally adapt the finite element mesh when the discretization error in a particular subdomain is dominant, and to increase the degree of subdomain overlap when iteration is the leading source of error. ``Adaptivity'' in the context of iterative methods, requires strategies for addressing both discretization and iteration errors.

\subsection{Error estimates and effectivity ratios}
\label{sec:mult_Schwarz_error_estimates_effectivity}

We compute approximate adjoint solutions $\Phi^{[k+i/p]} \approx \phi^{[k+i/p]}$, and $\Phi \approx \phi$
and then compute \eqref{eq:multiplicative_disc_err_rep} and \eqref{eq:multiplicative_tot_err_rep}.
The resulting  error estimates are
\begin{equation}
\label{eq:multiplicative_disc_err_est}
\eta_D^K \equiv \sum_{k=0}^{K-1} \sum_{i=1}^p  R_i(\widetilde{U}^{\{k+i/p\}}, \Phi^{[k+i/p]} - \pi_i \Phi^{[k+i/p]}),
\end{equation}
and
\begin{equation}
\label{eq:multiplicative_tot_err_est}
\eta^K \equiv  R (U^{\{K\}}, \Phi).
\end{equation}

One way to measure the  performance of an error estimates is the ``effectivity ratios'',
\begin{equation}
	\gamma= \frac{\eta^K}{(u - U^{\{K\}}, \psi)} \,,
\end{equation}
and
\begin{equation}
	\gamma_D = \frac{\eta_D^K}{(u^{\{K\}} - U^{\{K\}}, \psi)} \,.
\end{equation}
An effectivity ratio close to one indicates that the error estimate is accurate. We also recall that $e_I$ denotes the iteration error.

\subsection{Poisson's equation}
\label{sec:mult_Schwarz_examples_poisson}

Consider  Poisson's equation
\begin{equation}
\label{eq:Poissons_equation}
\begin{aligned}
	-\nabla^2 u  &= f, \quad \text{ in } \Omega, \\
		       u &= 0, \quad \text{ on } \partial \Omega,
\end{aligned}
\end{equation}
in a square domain $\Omega = [0 , 1] \times [0 , 1]$, where $f(x,y) = 8\pi^2\sin(2 \pi x) \sin(2 \pi y)$. The QoI in \eqref{eq:QoI_def} is specified by
\begin{equation}
\label{eq:QoI_4_Poisson}
\psi = \mathbbm{1}_{[.6, \, .8]\times [.6, \, .8]}.
\end{equation}
where $\mathbbm{1}_\omega$ is the characteristic function on a domain $\omega$.
In the computations below, unless otherwise specified, the mesh  is uniform and contains $2 \times N_x \times N_y$ triangular elements. The overlap between subdomains is indicated by $\beta$.


\subsubsection{$2 \times 1$ subdomains}
\label{sec:mult_Schwarz_example_poisson_2x1}

Two overlapping subdomains $\Omega_1 = [0, .6] \times [0, 1]$ and $\Omega_2 = [.4, 1] \times [0, 1]$ are illustrated in Figure~\ref{fig:2x1}, corresponding to an overlap parameter $\beta = 0.1$. The solid black lines in this figure and in subsequent figures, indicate the center line between overlapping subdomains.

\begin{figure}[!ht]
\centering
\subfloat[]{
\includegraphics[width=0.3\textwidth] {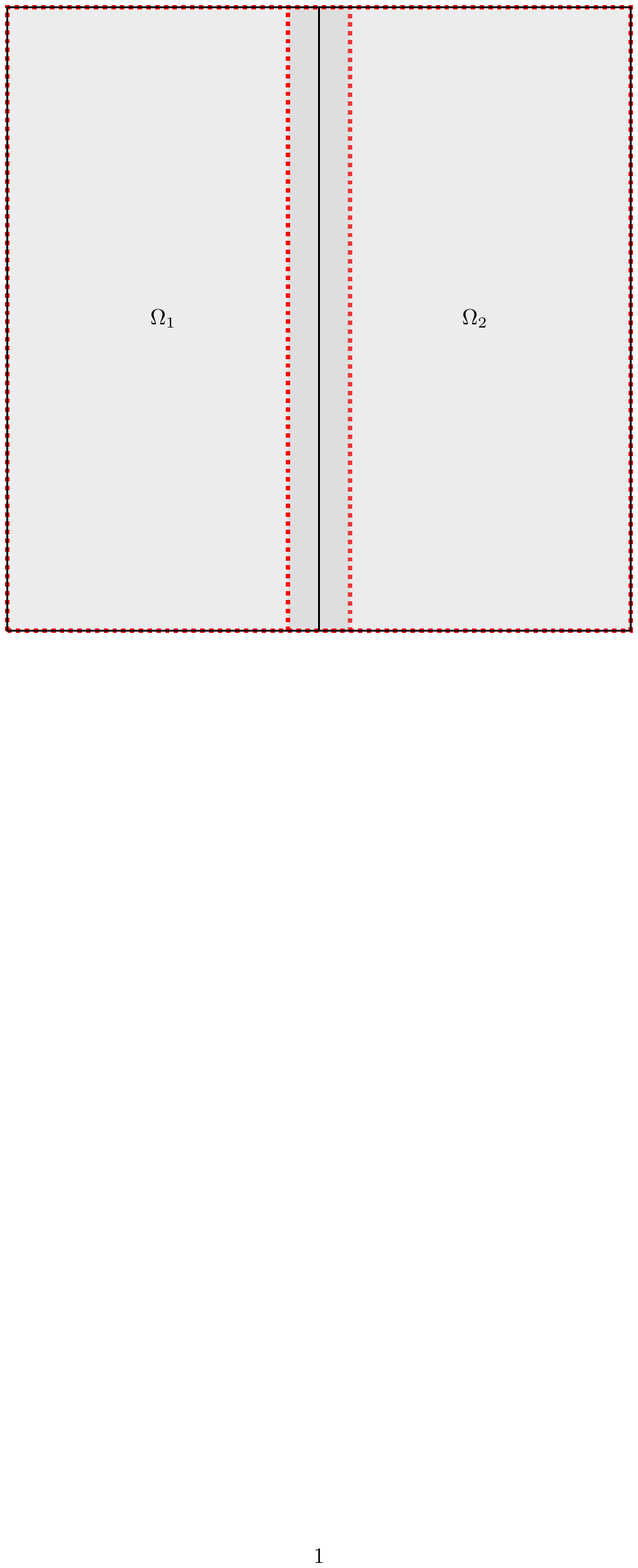}
\label{fig:2x1}
}
 \hfill
\subfloat[]{
\includegraphics[width=0.3\textwidth] {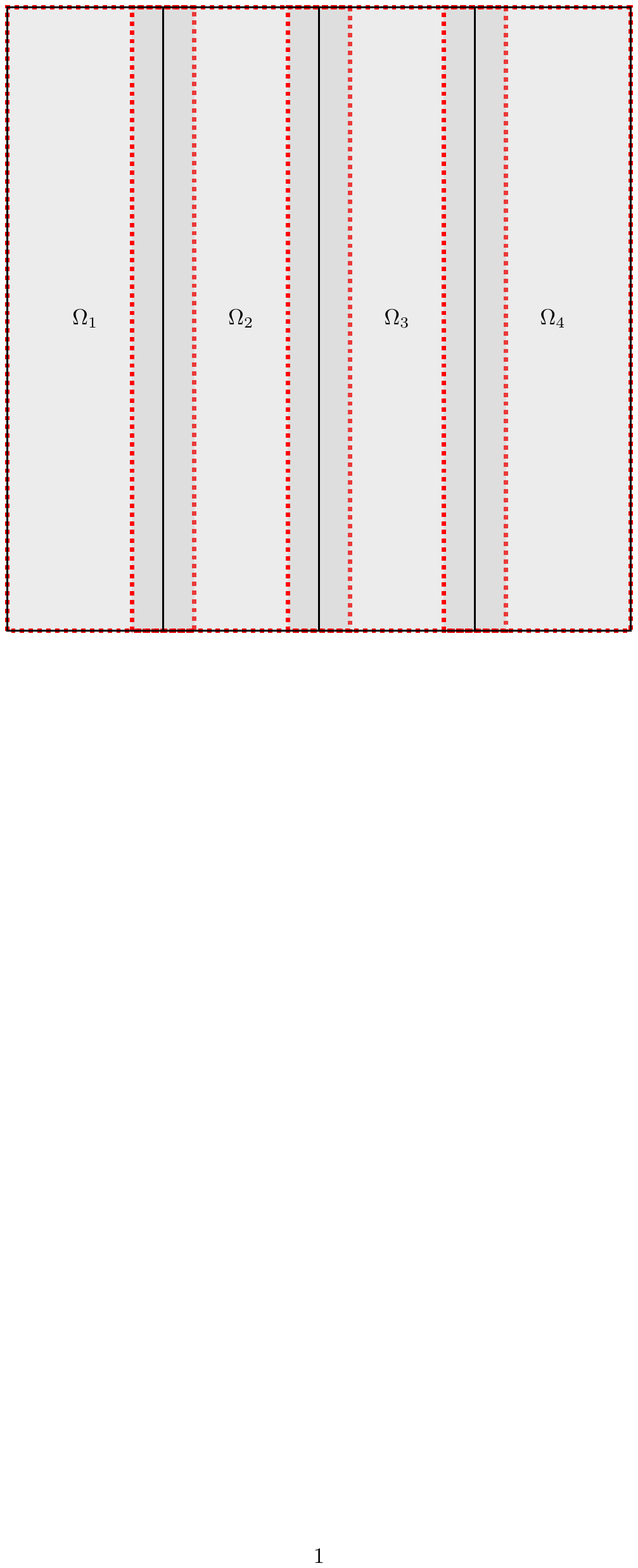}
\label{fig:4x1}
}
 \hfill
\subfloat[]{
\includegraphics[width=0.3\textwidth] {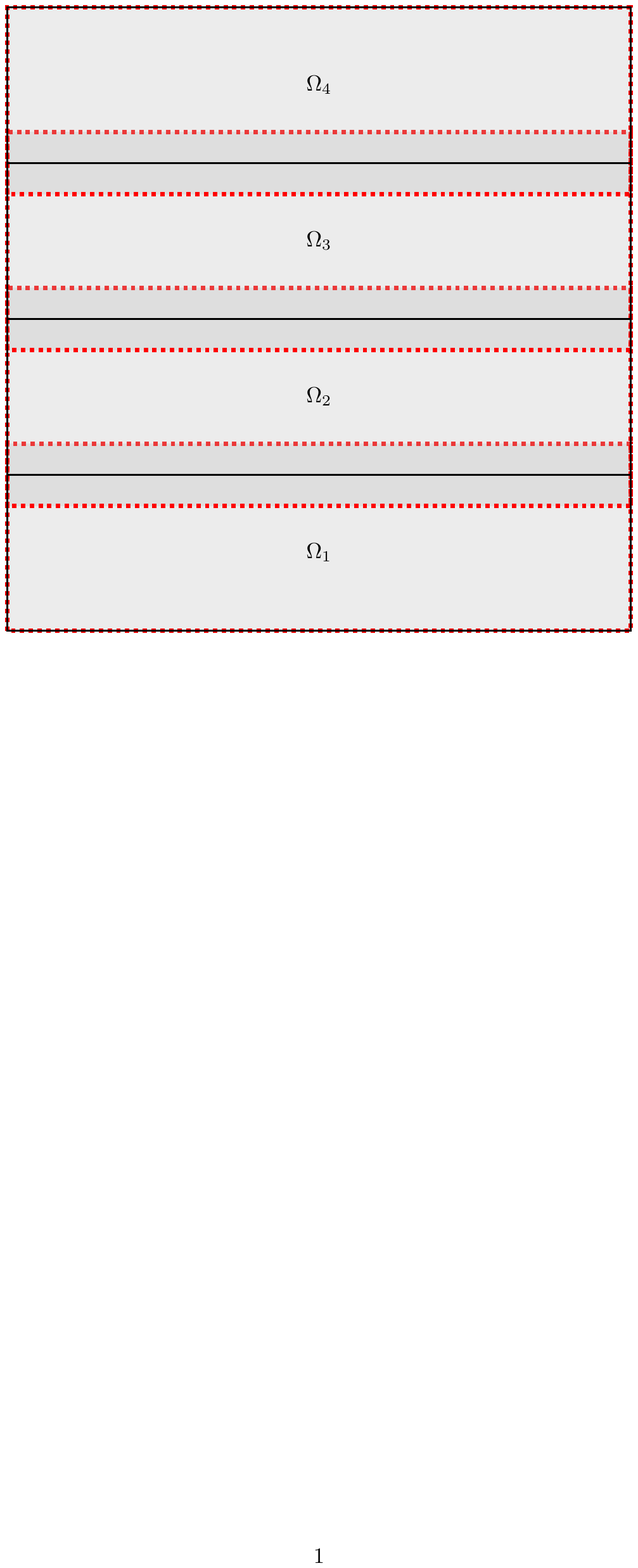}
\label{fig:1x4}
}
\caption{Overlapping subdomains with $\beta = 0.1$. (a) Two ($2 \times 1$) subdomains. (b) Four ($4 \times 1$)  subdomains. (c) Four ($1 \times 4$)  subdomains.}
\label{fig:beta_0_1_figs}
\end{figure}

Estimates of the discretization, iteration and total errors, and the corresponding effectivity ratios as we vary the overlap $\beta$, number of Schwarz iterations $K$ and number of elements are shown in Tables~\ref{tab:mult_Schwarz_example_poisson_2x1}. In all cases the effectivity ratios are close to 1.0. The table highlights the sensitivity of the estimates to the different contributions of the error. The ``base'' computation with $N_x=N_y=20, \beta=0.1$ and $K=2$ is repeated for ease of comparison. Increasing the overlap decreases the iteration error $e_I^{\{K\}}$ but does not have a significant effect on the discretization error $e_D^{\{K\}}$. The iteration error decreases with increasing number of Schwarz iterations, but the discretization error is largely unaffected when the mesh is fixed.  The discretization error decreases when the mesh is refined with a fixed number of iterations, but the iteration error remains essentially constant.

\begin{table}[H]
\centering
\begin{tabular}{||c|c|c|c|c|c|c|c|c||}
\hline
$N_x$ & $N_y$ & $\beta$ & $K$ & Est. Err. & $\gamma$ &  $e_D^{\{K\}}$ & $\gamma_D$  &    $e_I^{\{K\}}$  \\
\hline
20 & 20 & 0.1 & 2 &	1.02e-03 & 9.98E-01 & 6.56e-04 & 9.98E-01 & 3.60e-04	\\
20 & 20 & 0.2 &	2 & 7.03e-04 & 9.96E-01 & 6.28e-04 & 9.97E-01 & 7.50e-05	\\
\hline
20 & 20 & 0.1 & 2 &	1.02e-03 & 9.98E-01 & 6.56e-04 & 9.98E-01 & 3.60e-04	\\
20 & 20 & 0.1 & 4 &	6.55e-04 & 9.96E-01 & 6.26e-04 & 9.97E-01 & 2.89e-05	\\
\hline
20 & 20 & 0.1 & 2 & 1.02e-03 & 9.98E-01 & 6.56e-04 & 9.98E-01 & 3.60e-04	\\
40 & 40 & 0.1 & 2 & 5.25e-04 & 1.00E+00 & 1.66e-04 & 9.99E-01 & 3.60e-04	\\
\hline
\end{tabular}
\caption{Multiplicative Schwarz for Poisson's equation: $2 \times 1$ subdomains.}
\label{tab:mult_Schwarz_example_poisson_2x1}
\end{table}

\subsubsection{$4 \times 1$ subdomains}
\label{sec:mult_Schwarz_example_poisson_4x1}

The computational domains for $\beta=0.1$ are shown in Figure~\ref{fig:4x1}.  It is well known that as the number of subdomains increases, the convergence of Schwarz methods decreases, and this is evident by comparing Tables~\ref{tab:mult_Schwarz_example_poisson_4x1} and \ref{tab:mult_Schwarz_example_poisson_2x1}. While the discretization errors of the four subdomain and two subdomain cases are comparable in magnitude, the iteration error $e_I^{\{K\}}$ is an order of magnitude larger for four subdomains compared to two.  The contributions of the separate components of the total error vary with the overlap, number of iterations and number of elements in a qualitatively similar way to the results  in \S~\ref{sec:mult_Schwarz_example_poisson_2x1}.

\begin{table}[H]
\centering
\begin{tabular}{||c|c|c|c|c|c|c|c|c||}
\hline
$N_x$ & $N_y$ & $\beta$ & $K$ & Est. Err. & $\gamma$ &  $e_D^{\{K\}}$ & $\gamma_D$  &    $e_I^{\{K\}}$  \\
\hline
20 & 20 & 0.1 & 2 & 4.57e-03 &	9.99e-01 &	6.92e-04 &	9.97e-01 &	3.88e-03	\\
20 & 20 & 0.2 & 2 &	1.34e-03 &	9.98e-01 &	6.48e-04 &	9.98e-01 &	6.87e-04	\\
\hline
20 & 20 & 0.1 & 2 &	4.57e-03 &	9.99e-01 &	6.92e-04 &	9.97e-01 &	3.88e-03	\\
20 & 20 & 0.1 & 4 &	1.04e-03 &	9.98e-01 &	6.48e-04 &	9.98e-01 &	3.94e-04	\\
\hline
20 & 20 & 0.1 & 2 &	4.57e-03&	9.99e-01&	6.92e-04&	9.97e-01&	3.88e-03	\\
40 & 40 & 0.1 & 2 &	4.05e-03&	1.00e+00&	1.75e-04&	9.99e-01&	3.88e-03	\\
\hline
\end{tabular}
\caption{Multiplicative Schwarz for Poisson's equation: $4 \times 1$ subdomains.}
\label{tab:mult_Schwarz_example_poisson_4x1}
\end{table}

\subsubsection{$4 \times 4$ subdomains}
\label{sec:mult_Schwarz_example_poisson_4x4}

The computational domains for $\beta=0.1$ and sixteen equally-sized subdomains are configured in a $4 \times 4$ grid, see Figure~\ref{fig:4x4}. The error estimates are quite accurate. The results, shown in Table \ref{tab:mult_Schwarz_example_poisson_4x4} are qualitatively similar to those in Tables \ref{tab:mult_Schwarz_example_poisson_2x1} and \ref{tab:mult_Schwarz_example_poisson_4x1}. The iteration error is larger than in  the $4 \times 1$ case, while the discretization errors are essentially the same in both, which is to be expected since the finite element meshes are the same.

\begin{table}[H]
\centering
\begin{tabular}{||c|c|c|c|c|c|c|c|c||}
\hline
$N_x$ & $N_y$ & $\beta$ & $K$ & Est. Err. & $\gamma$ &  $e_D^{\{K\}}$ & $\gamma_D$  &    $e_I^{\{K\}}$  \\
\hline
20 & 20 & 0.1 & 2 &	9.22e-03 &	1.00e+00 &	1.02e-03 &	1.00e+00 &	8.20e-03	\\
20 & 20 & 0.2 &	2 & 2.80e-03 &	9.99e-01 &	7.31e-04 &	9.98e-01 &	2.07e-03	\\
\hline
20 & 20 & 0.1 & 2 &	9.22e-03 &	1.00e+00 &	1.02e-03 &	1.00e+00 &	8.20e-03	\\
20 & 20 & 0.1 & 4 &	2.72e-03 &	9.99e-01 &	8.27e-04 &	9.99e-01 &	1.90e-03	\\
\hline
20 & 20 & 0.1 & 2 &	9.22e-03&	1.00e+00&	1.02e-03&	1.00e+00&	8.20e-03	\\
40 & 40 & 0.1 & 2 &	8.45e-03&	1.00e+00&	2.55e-04&	1.00e+00&	8.20e-03	\\
\hline
\end{tabular}
\caption{Multiplicative Schwarz for Poisson's equation: $4 \times 4$ subdomains.}
\label{tab:mult_Schwarz_example_poisson_4x4}
\end{table}

\subsection{Cancellation of error}
\label{sec:mult_Schwarz_examples_cancellation}

To illustrate the potential for cancellation between discretization and iteration errors, the quantity of interest is chosen to be
\begin{equation}
\psi = \mathbbm{1}_{[.4, \, .8]\times [.4, \, .8]}.
\end{equation}
for two subdomains and an overlap $\beta=0.05$. Computational results for an increasing number of Schwarz iterations are shown in Table~\ref{tab:mult_Schwarz_examples_cancellation}. The magnitude of the total error initially decreases as the iteration proceeds, reaching a minimum after six iterations, but then starts to increase. This behavior is explained by observing that the discretization and iteration errors have opposite signs. The discretization error is essentially fixed as the iteration proceeds and has a value of $-1.6 \times 10^{-4}$. The initial iteration error is of order $4.0 \times 10^{-3}$ and dominates the total error. As expected, the iteration error decreases monotonically as $K$ increases, but is always positive. After six iterations the discretization and iteration errors have approximately equal magnitudes but opposite signs, and cancel to produce a total error of $3.0\times 10^{-5}$. For greater than six iterations, the iteration error continues to decrease and now the discretization error dominates the total error. The total error increases to $-1.5\times 10^{-4}$ after 10 iterations and gradually approaches the (fixed) discretization error as the number of iterations increases further.

\begin{table}[H]
\centering
\begin{tabular}{||c|c|c|c|c|c||}
\hline
$K$ & Est. Err. & $\gamma$ &  $e_D^{\{K\}}$ & $\gamma_D$  & $e_I^{\{K\}}$  \\
\hline
1&	3.98e-03&	1.00e+00&	-1.50e-05&	9.98e-01&	4.00e-03	\\
2&	2.07e-03&	1.00e+00&	-5.95e-05&	9.99e-01&	2.13e-03	\\
3&	1.04e-03&	1.00e+00&	-9.11e-05&	1.00e+00&	1.13e-03	\\
4&	4.89e-04&	1.00e+00&	-1.14e-04&	1.00e+00&	6.03e-04	\\
5&	1.91e-04&	1.00e+00&	-1.30e-04&	1.00e+00&	3.21e-04	\\
6&	2.97e-05&	1.01e+00&	-1.41e-04&	1.00e+00&	1.71e-04	\\
7&	-5.83e-05&	9.96e-01&	-1.49e-04&	1.00e+00&	9.09e-05	\\
8&	-1.07e-04&	9.98e-01&	-1.55e-04&	1.00e+00&	4.84e-05	\\
9&	-1.33e-04&	9.98e-01&	-1.59e-04&	1.00e+00&	2.58e-05	\\
10&	-1.48e-04&	9.98e-01&	-1.62e-04&	1.00e+00&	1.37e-05	\\
\hline
\end{tabular}
\caption{Multiplicative Schwarz for Poisson's equation: $2 \times 1$ subdomains, $N_x = N_y = 40, \ \beta = 0.05$.}
\label{tab:mult_Schwarz_examples_cancellation}
\end{table}

\subsection{A convection-diffusion problem}
\label{sec:mult_Schwarz_examples_convection_diffusion}

Consider the convection-diffusion equation,
\begin{equation}
\label{eq:convection_diffusion}
\begin{aligned}
	- \nabla^2 u  + \mathbf{b}\cdot \nabla u =& f, \quad \text{ in } \Omega, \\
		u =& 0, \quad \text{ on } \partial \Omega,
\end{aligned}
\end{equation}
where $\Omega = [0 , 1] \times [0 , 1] , f(x,y) = 1$, and $\mathbf{b} = [-60,0]$. The effect of the convection  is that a perturbation to data on the right affects the solution to the left. For this example, we choose the quantity of interest
\begin{equation}
\label{eq:convection_diffusion_QoI}
\psi = \mathbbm{1}_{[.05, \, .2]\times [.05, \, .2]},
\end{equation}
concentrated near the bottom left hand corner. 
The adjoint problems are solved using continuous piecewise cubic polynomials to ensure accurate solutions in the presence of the strong convective vector field. We experiment with two configurations with the subdomains aligned with different coordinate axes, and either parallel with or perpendicular to the direction of convection.

\subsubsection{$4 \times 1$ configuration}

This subdomain configuration is the same as in Figure~\ref{fig:4x1}. The total, discretization and iteration errors are provided in Table~\ref{tab:mult_Schwarz_examples_convection_diffusion}. Note the significant iteration error in this configuration for $K=2$, which dominates the total error. The large iteration error for $K=2$ is to be expected given direction of information travel from right to left. The iteration error decreases dramatically for $K=4$ and $K=6$, and discretization error becomes the dominant error.

\subsubsection{$1 \times 4$ configuration}
This subdomain configuration is shown in Figure~\ref{fig:1x4}. The subdomains are aligned with the direction of the convective vector field. The iteration error after two iterations and the total error are more than an order of magnitude less than in the $4 \times 1$ case. In this scenario, one subdomain contains most of the ``domain of influence'' for the QoI~\cite{EHL2005} and hence results in low iteration error, even for $K=2$. There is again cancellation between the discretization and iteration errors for $K=2$ so that the total error increases for $K=4$ and $K=6$ with the total error dominated by the discretization error.

\begin{figure}[!ht]
\centering
\subfloat[]{
\includegraphics[width=0.3\textwidth] {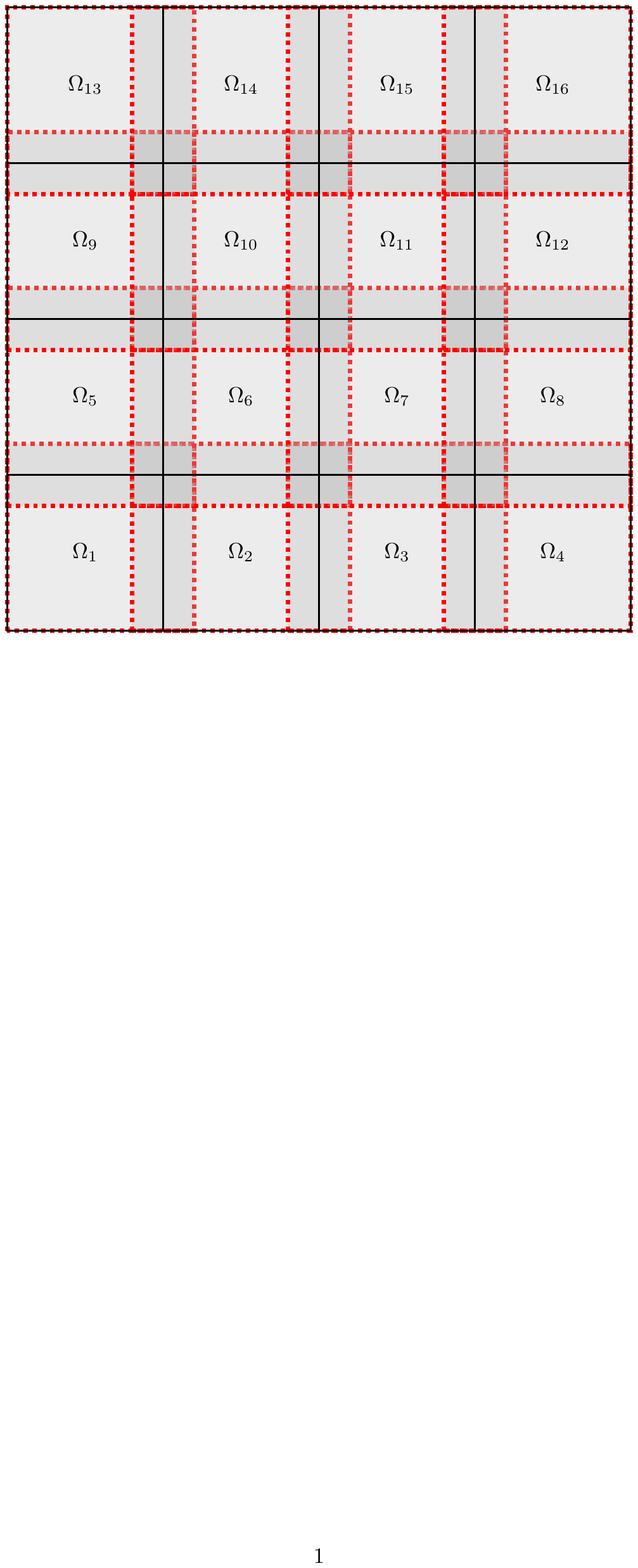}
\label{fig:4x4}
}
\hfill
\subfloat[]{
\includegraphics[width=0.3\textwidth] {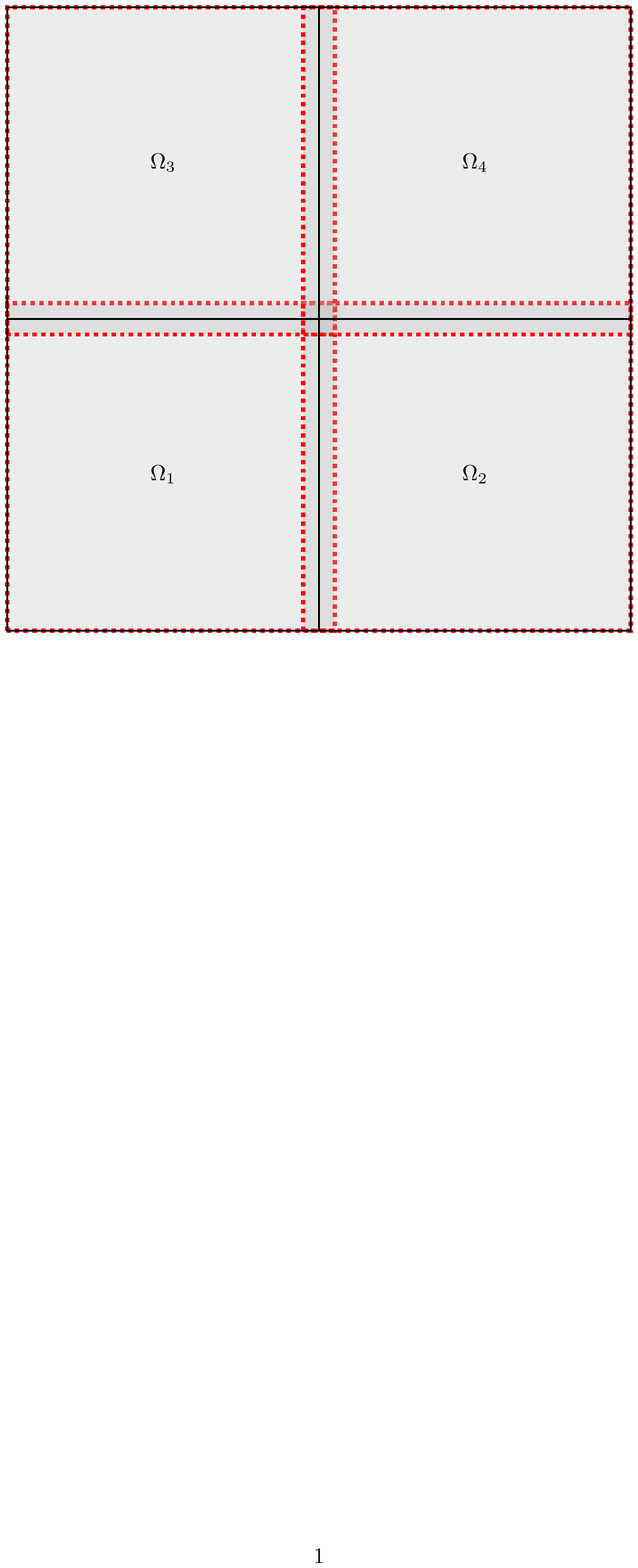}
\label{fig:2x2_p05}
}
\hfill
\subfloat[]{
\includegraphics[width=0.3\textwidth] {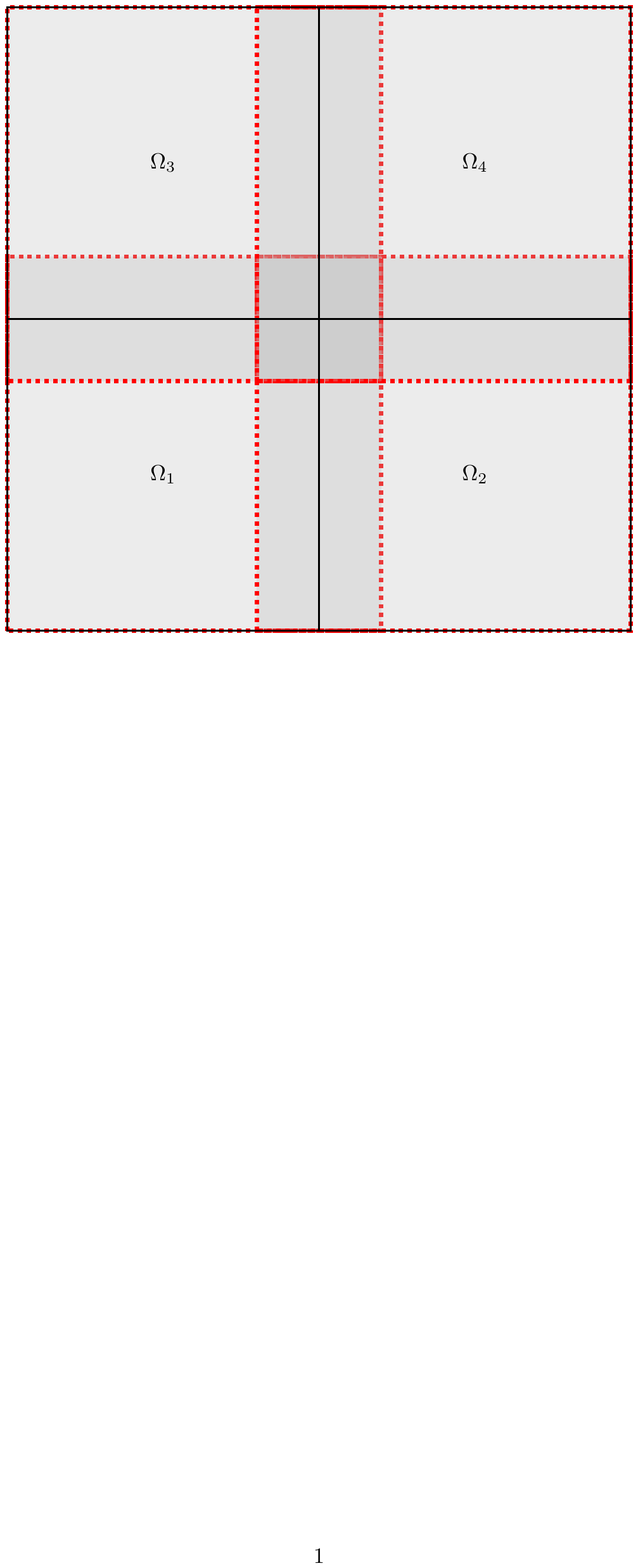}
\label{fig:2x2_p2}
}
\caption{ (a) Sixteen  ($4 \times 4$) overlapping subdomains  with $\beta = 0.1$.
(b) Four ($2 \times 2$) overlapping subdomains with $\beta = 0.05$.
(c) Four ($2 \times 2$) overlapping subdomains with $\beta = 0.2$.
}
\label{fig:mixed_beta_figs}
\end{figure}

\begin{table}[H]
\centering
\begin{tabular}{||c|c|c|c|c|c||}
\hline
$K$ & Est. Err. & $\gamma$ &  $e_D^{\{K\}}$ & $\gamma_D$  & $e_I^{\{K\}}$  \\
\hline
\multicolumn{6}{||c||}{$4 \times 1$ configuration} \\
\hline
2&	9.76e-03&	1.00e+00&	-1.54e-04&	9.87e-01&	9.92e-03	\\
4&	-1.15e-04&	9.81e-01&	-1.42e-04&	9.77e-01&	2.67e-05	\\
6&	-3.54e-04&	9.94e-01&	-3.54e-04&	9.94e-01&	4.36e-10	\\
\hline
\multicolumn{6}{||c||}{$1 \times 4$ configuration} \\
\hline
2&	8.42e-05&	1.03e+00&	-3.73e-04&	9.94e-01&	4.57e-04	\\
4&	-3.54e-04&	9.94e-01&	-3.55e-04&	9.94e-01&	3.53e-07	\\
6&	-3.54e-04&	9.94e-01&	-3.54e-04&	9.94e-01&	3.70e-11	\\
\hline
\end{tabular}
\caption{Multiplicative Schwarz for convection-diffusion: $N_x = N_y = 20, \ \beta =  0.1$.}
\label{tab:mult_Schwarz_examples_convection_diffusion}
\end{table}

\subsection{Two stage solution strategy for Poisson's equation}
\label{sec:mult_Schwarz_example_two_stage_strategy}

Adjoint-based \emph{a posteriori} error estimates can provide useful information for designing efficient two stage strategies for computing approximate solutions. First, a preliminary, inexpensive computation is performed on a coarse discretization. The \emph{a posteriori} error estimate for the  ``stage 1'' solution is computed and the different error contributions determined. A  more expensive ``stage 2'' approximation is computed  using numerical parameters chosen to balance the sources of error. We provide two examples of this strategy below. The stage 1 computation for both experiments is run on a $2 \times 2$ subdomain configuration as shown in Figure~\ref{fig:2x2_p2}.

\subsubsection{Dominant discretization error }
\label{sec:mult_Schwarz_num_exp_disc_err_dom}

Consider the QoI given by \eqref{eq:QoI_4_Poisson}. The results on the initial $2 \times 2$ subdomain configuration with $N_x = N_y = 10, \beta = 0.2$ and  $K = 6$ are provided in  Table~\ref{tab:mult_Schwarz_example_two_stage_strategy_discretization}. The mesh for this computation is shown in Figure~\ref{fig:2x2_mesh_uniform}. The main source of the error is the discretization error $e_D^{\{K\}}$. In order to reduce the discretization error, we need to reduce the discretization error contribution arising from each subdomain. We define the contribution to the discretization error from subdomain $i$ as
\begin{equation}
	S_i^K = \sum_{k=0}^{K-1} R_i( \widetilde{U}^{\{k+i/p\}}, \phi^{[k+i/p]} - \pi_i \phi^{[k+i/p]} ), \qquad i=1, \dots, p,
\end{equation}
so that the discretization error, \eqref{eq:multiplicative_disc_err_rep} may be written as
\begin{equation}
(\psi, u^{\{K\}} - U^{\{K\}})	 =  \sum_{i=1}^p  S_i^K.
\end{equation}
The values of $S_i^K$ for the stage 1 calculation are also shown in Table~\ref{tab:mult_Schwarz_example_two_stage_strategy_discretization}. Subdomain 4 contributes the most towards the discretization error, and hence it is the prime candidate for refinement.  After refining all the elements in subdomain 4, the refined mesh is shown in Figure~\ref{fig:2x2_mesh_ref}. The discretization errors in each subdomain $S_i^K$ and the total error after the refinement are shown in Table~\ref{tab:mult_Schwarz_example_two_stage_strategy_discretization}. The discretization error is significantly lower and hence the total error is also significantly lower. The values of $S_i^K$ also indicate that now each subdomain contributes roughly the same magnitude towards the discretization error.

We note that we can take advantage of  cancellation of the discretization errors. Applying the standard approximation theory for degree one Lagrange finite elements, we expect the discretization error component $S_4^K$ to decrease by a factor of four if we refine the mesh corresponding to subdomain 4 uniformly. The conjectured value for $S_4^K$ is therefore approximately $9 \times 10^{-4}$.  The discretization errors from subdomains 2 and 3 (represented by $S_2^K$ and $S_3^K$) have negative signs and are not expected to change as significantly when subdomain 4 is refined. As shown in Table~\ref{tab:mult_Schwarz_example_two_stage_strategy_discretization}, after refinement of subdomain 4, there is significant cancellation of error between subdomains 2 and 3 and subdomain 4, and the total error is  $3.44 \times 10^{-4}$. \emph{Uniformly} refining the entire initial mesh results in a refined mesh with 441 vertices (shown in Figure~\ref{fig:2x2_mesh_uniform_ref}) and the solution after $K=6$ iterations has a total error of $6.24 \times 10^{-4}$, which is approximately the expected four-fold reduction in error. The mesh refined using adjoint based error information in Figure~\ref{fig:2x2_mesh_ref} has almost half the number of degree of freedoms of the uniformly refined mesh in Figure~\ref{fig:2x2_mesh_uniform_ref}, but none-the-less has half the total error ($3.44 \times 10^{-4}$ vs. $6.24 \times 10^{-4}$). Recognizing and taking advantage of cancellation of error can produce otherwise startling efficiencies. Similar refinement strategies, where specific ``components'' are refined to exploit cancellation of error, are also employed in \cite{Chaudhry17, chaudhry2016posteriori}. Such component-wise refinement strategies allow for estimation of the decrease of error in a more reliable manner than classical adaptive refinement strategies in which disparate elements are marked for refinement.


\begin{figure}[!ht]
\centering
\subfloat[]{
\includegraphics[width=0.3\textwidth] {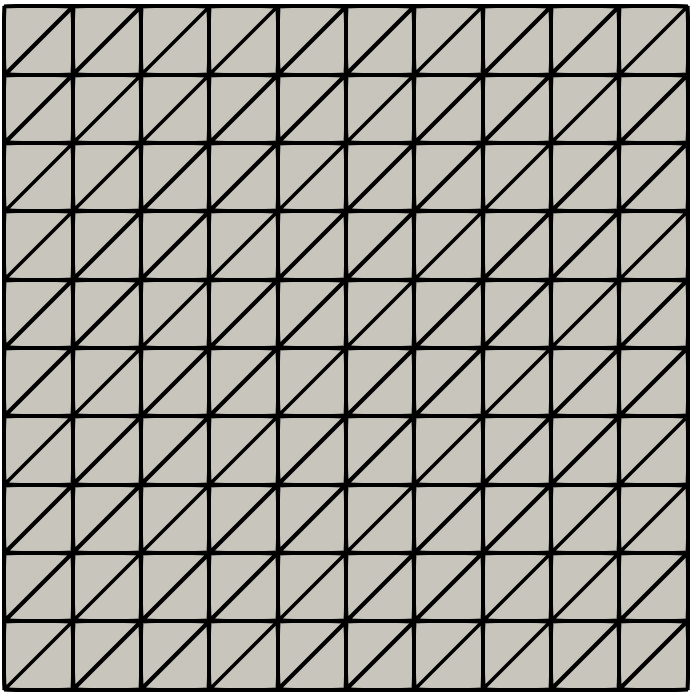}
\label{fig:2x2_mesh_uniform}
}
\hfill
\centering
\subfloat[]{
\includegraphics[width=0.3\textwidth] {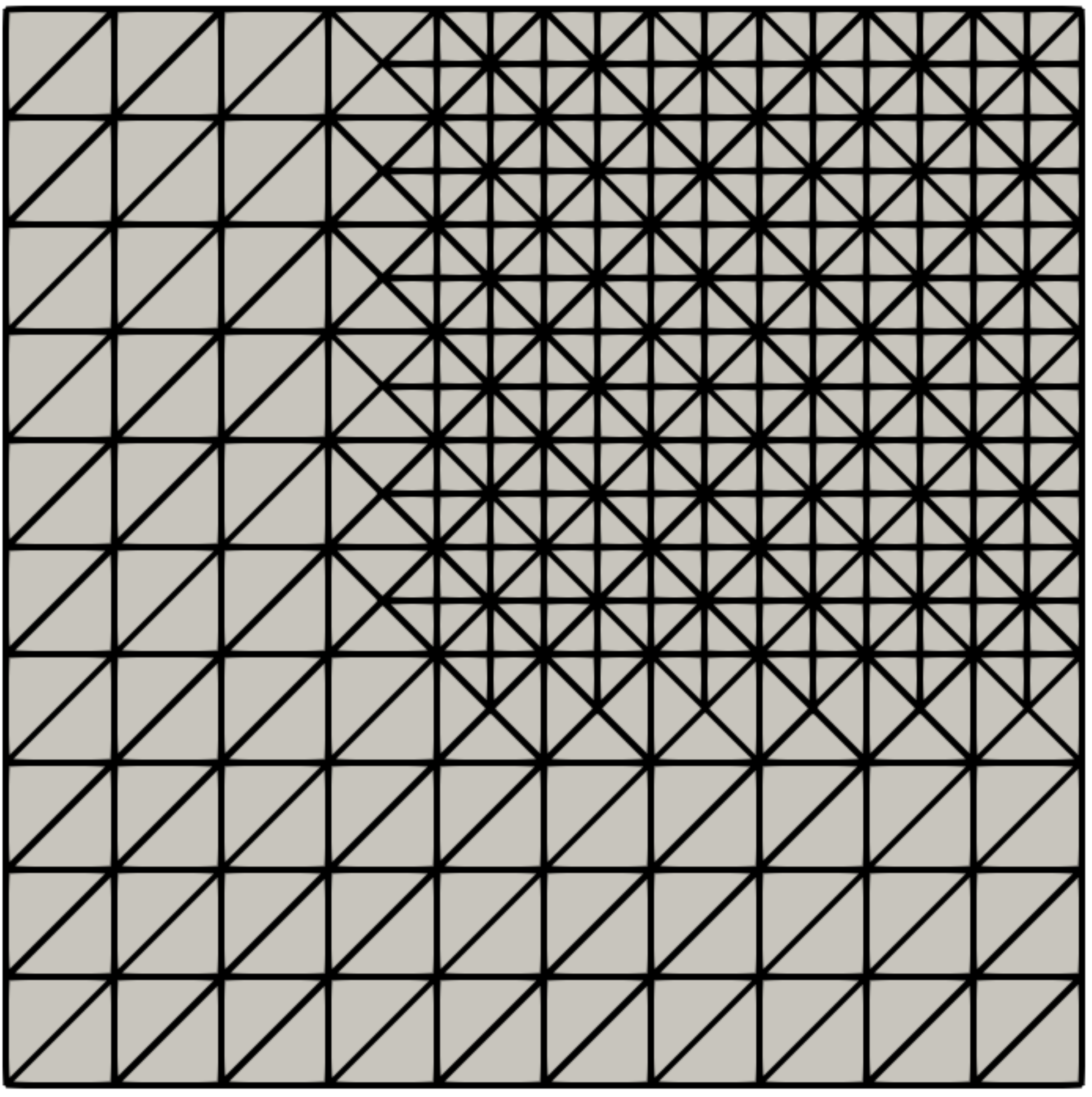}
\label{fig:2x2_mesh_ref}
}
\hfill
\centering
\subfloat[]{
\includegraphics[width=0.3\textwidth] {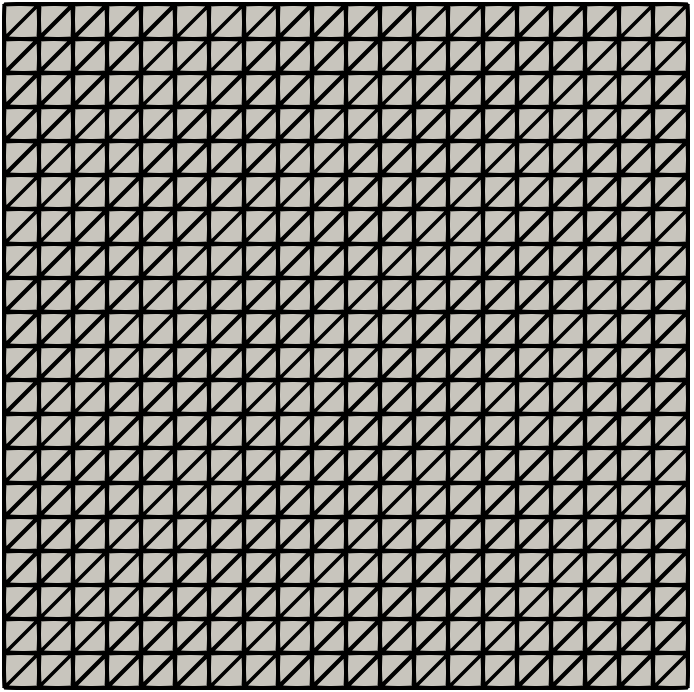}
\label{fig:2x2_mesh_uniform_ref}
}
\caption{ (a) Initial uniform mesh. (b) Mesh refinement  in $\Omega_4$ guided by adjoint based error estimates. (c) Uniformly refined mesh.
}
\label{fig:mesh_ref_2x2}
\end{figure}

\begin{table}[H]
\centering
\begin{tabular}{||c|c|c|c|c|c|c|| }
\hline
Stage & Num. vertices  & Est. Err. & $\gamma$ &  $e_D^{\{K\}}$ & $\gamma_D$  & $e_I^{\{K\}}$  \\
\hline
1 & 121 & 2.36e-03 & 9.83e-01 &	2.36e-03 &	9.89e-01 &	6.98e-06  \\
2 & 253 & 3.44e-04 & 1.00e+00 &	3.37e-04 &	1.05e+00 &	6.97e-06  \\
\hline
Stage & Num. vertices &  $i$ 	& 1 	& 2 	& 3 	& 4   \\
\hline
1 & 121 & $S_i^K$ & 3.07e-04 & -7.94e-04 & -7.82e-04 & 3.62e-03 \\
2 & 253 & $S_i^K$ & 1.82e-04 & -3.87e-04 & -3.85e-04 & 9.27e-04 \\
\hline
\end{tabular}
\caption{Two stage solution strategy using multiplicative Schwarz to solve Poisson's equation: $\beta = 0.2, \ K=6$.}
\label{tab:mult_Schwarz_example_two_stage_strategy_discretization}
\end{table}

\subsubsection{Dominant iteration error}
\label{sec:mult_Schwarz_num_exp_it_err_dom}

For the same choice of QoI, we perform a stage 1 computation with $2 \times 2$ subdomains and $N_x = N_y = 40, \beta = 0.05$ and $K=2$. This configuration is shown in Figure \ref{fig:2x2_p05}. The contributions to the total error are shown in Table~\ref{tab:mult_Schwarz_example_two_stage_strategy_iteration}. The dominant source of the error is the iteration error $e_I^{\{K\}}$. There are two ways to reduce it, either by performing a great number of iterations or increasing $\beta$. We choose the latter option and set $\beta=0.2$, see Figure \ref{fig:2x2_p2}. The results are shown in Table \ref{tab:mult_Schwarz_example_two_stage_strategy_iteration}, where now the iteration error and discretization are balanced and the overall error has decreased.

\begin{table}[H]
\centering
\begin{tabular}{||c|c|c|c|c|c|c|c|c|c||}
\hline
Stage & $N_x$ & $N_y$ & $\beta$ & $K$ & Est. Err. & $\gamma$ &  $e_D^{\{K\}}$ & $\gamma_D$  &    $e_I^{\{K\}}$  \\
\hline
1& 40 & 40 & 0.05 & 2 & 1.23e-03&	1.00e+00&	1.79e-04&	9.99e-01&	1.05e-03	\\
2& 40 & 40 & 0.2  & 2 & 5.04e-04&	1.00e+00&	1.62e-04&	9.99e-01&	3.42e-04	\\
\hline
\end{tabular}
\caption{Two stage solution strategy using multiplicative Schwarz to solve Poisson's equation. Stage 1: $\beta = 0.05$, stage 2: $\beta = 0.2$.}
\label{tab:mult_Schwarz_example_two_stage_strategy_iteration}
\end{table}

\section{Numerical examples for additive Schwarz}
\label{sec:add_Schwarz_numerical_examples}

We repeat analogous numerical examples in \S \ref{sec:mult_Schwarz_numerical_examples} for additive Schwarz. Effectivity ratios for the discretization error and the total error are defined analogously to the case of the multiplicative Schwarz case by replacing $\Phi^{[k+i/p]}$ in the above expressions by $\Phi^{[k]}_i$ in the expressions in \S \ref{sec:mult_Schwarz_error_estimates_effectivity}, where $\Phi^{[k]}_i$ is the numerical approximation to $\phi^{[k]}_i$. A relaxation parameter $\tau=0.4$ was used in all examples. The error estimates are again highly accurate with effectivity ratios close to 1.

\subsection{Estimates for Poisson's equation}
\label{sec:add_Schwarz_examples_poisson}

\subsubsection{$2 \times 1$ subdomains}
\label{sec:add_Schwarz_example_poisson_2x1}

We solve the same problem described in \S \ref{sec:mult_Schwarz_example_poisson_2x1} by equations \eqref{eq:Poissons_equation} and \eqref{eq:QoI_4_Poisson} using additive Schwarz. The results are shown in Table \ref{tab:add_Schwarz_example_poisson_2x1}.  In comparison to the results in \S \ref{sec:mult_Schwarz_example_poisson_2x1}, we observe that the additive Schwarz method has much higher iteration error than multiplicative Schwarz method. The discretization error is of course approximately the same.

\begin{table}[H]
\centering
\begin{tabular}{||c|c|c|c|c|c|c|c|c||}
\hline
$N_x$ & $N_y$ & $\beta$ & $K$ & Est. Err. & $\gamma$ &  $e_D^{\{K\}}$ & $\gamma_D$  &    $e_I^{\{K\}}$  \\
\hline
20 & 20 & 0.1 & 2 & 1.09e-02&	1.00e+00&	4.52e-04&	9.98e-01&	1.05e-02	\\
20 & 20 & 0.2 & 2 & 1.04e-02&	1.00e+00&	4.34e-04&	9.98e-01&	9.96e-03	\\
\hline
20 & 20 & 0.1 & 2 & 1.09e-02&	1.00e+00&	4.52e-04&	9.98e-01&	1.05e-02	\\
20 & 20 & 0.1 & 4 & 4.23e-03&	9.99e-01&	6.02e-04&	9.98e-01&	3.62e-03	\\
\hline
20 & 20 & 0.1 & 2 & 1.09e-02&	1.00e+00&	4.52e-04&	9.98e-01&	1.05e-02	\\
40 & 40 & 0.1 & 2 & 1.06e-02&	1.00e+00&	1.14e-04&	9.99e-01&	1.05e-02	\\
\hline
\end{tabular}
\caption{Additive Schwarz for Poisson's equation: $2 \times 1$ subdomains.}
\label{tab:add_Schwarz_example_poisson_2x1}
\end{table}

\subsubsection{$4 \times 1$ subdomains}
\label{sec:add_Schwarz_example_poisson_4x1}

The results solving the same problem using twice the number of subdomains are shown in Table \ref{tab:add_Schwarz_example_poisson_4x1}. The iteration error is considerably larger than for multiplicative Schwarz and the convergence rate with increasing numbers of iterations appears to be much slower. The discretization error is again approximately the same.

\begin{table}[H]
\centering
\begin{tabular}{||c|c|c|c|c|c|c|c|c||}
\hline
$N_x$ & $N_y$ & $\beta$ & $K$ & Est. Err. & $\gamma$ &  $e_D^{\{K\}}$ & $\gamma_D$  &    $e_I^{\{K\}}$  \\
\hline
20 & 20 & 0.1 & 2 & 1.89e-02&	1.00e+00&	5.42e-04&	9.96e-01&	1.84e-02	\\
20 & 20 & 0.2 &	2 & 1.19e-02&	1.00e+00&	6.04e-04&	9.97e-01&	1.13e-02	\\
\hline
20 & 20 & 0.1 & 2 &	1.89e-02&	1.00e+00&	5.42e-04&	9.96e-01&	1.84e-02	\\
20 & 20 & 0.1 & 4 &	1.21e-02&	1.00e+00&	6.51e-04&	9.97e-01&	1.14e-02	\\
\hline
20 & 20 & 0.1 & 2 &	1.89e-02&	1.00e+00&	5.42e-04&	9.96e-01&	1.84e-02	\\
40 & 40 & 0.1 & 2 &	1.85e-02&	1.00e+00&	1.38e-04&	9.99e-01&	1.84e-02	\\
\hline
\end{tabular}
\caption{Additive Schwarz for Poisson's equation: $4 \times 1$ subdomains.}
\label{tab:add_Schwarz_example_poisson_4x1}
\end{table}

\subsubsection{$4 \times 4$ subdomains}
\label{sec:add_Schwarz_example_poisson_4x4}

Repeating the problem in \S \ref{sec:mult_Schwarz_example_poisson_4x4} and using additive Schwarz produces the results provided in Table \ref{tab:add_Schwarz_example_poisson_4x4}.

\begin{table}[H]
\centering
\begin{tabular}{||c|c|c|c|c|c|c|c|c||}
\hline
$N_x$ & $N_y$ & $\beta$ & $K$ & Est. Err. & $\gamma$ &  $e_D^{\{K\}}$ & $\gamma_D$  &    $e_I^{\{K\}}$  \\
\hline
20 & 20 & 0.1 & 2 &	2.18e-02&	1.00e+00&	6.83e-04&	1.00e+00&	2.12e-02	\\
20 & 20 & 0.2 &	2 & 1.25e-02&	1.00e+00&	6.14e-04&	1.00e+00&	1.18e-02	\\
\hline
20 & 20 & 0.1 & 2 &	2.18e-02&	1.00e+00&	6.83e-04&	1.00e+00&	2.12e-02	\\
20 & 20 & 0.1 & 4 &	1.58e-02&	1.00e+00&	9.42e-04&	9.86e-01&	1.48e-02	\\
\hline
20 & 20 & 0.1 & 2 &	2.18e-02&	1.00e+00&	6.83e-04&	1.00e+00&	2.12e-02	\\
40 & 40 & 0.1 & 2 &	2.13e-02&	1.00e+00&	1.70e-04&	1.00e+00&	2.12e-02	\\
\hline
\end{tabular}
\caption{Additive Schwarz for Poisson's equation: $4 \times 4$ subdomains.}
\label{tab:add_Schwarz_example_poisson_4x4}
\end{table}

Once again the iteration error is significantly greater than in the multiplicative case and appears to improve more slowly with increasing overlap or number of iterations.

\subsection{A convection-diffusion problem}
\label{sec:add_Schwarz_example_convection_diffusion}

The problem formulation is defined in \S \ref{sec:mult_Schwarz_examples_convection_diffusion} by equations \eqref{eq:convection_diffusion} and \eqref{eq:convection_diffusion_QoI}. We provide results for two different configurations of the subdomains in Table \ref{tab:add_Schwarz_example_convection_diffusion} below.

\begin{table}[H]
\centering
\begin{tabular}{||c|c|c|c|c|c||}
\hline
$K$ & Est. Err. & $\gamma$ &  $e_D^{\{K\}}$ & $\gamma_D$  & $e_I^{\{K\}}$  \\
\hline
\multicolumn{6}{||c||}{$4 \times 1$ configuration} \\
\hline
2&	1.78e-02&	1.00e+00&	-1.04e-04&	9.92e-01&	1.79e-02	\\
4&	1.28e-02&	1.00e+00&	-9.38e-05&	9.81e-01&	1.29e-02	\\
6&	8.40e-03&	1.00e+00&	-5.56e-05&	9.54e-01&	8.46e-03	\\
\hline
\multicolumn{6}{||c||}{$1 \times 4$ configuration} \\
\hline
2&	1.08e-02&	1.00e+00&	-1.32e-04&	9.91e-01&	1.10e-02	\\
4&	5.11e-03&	1.00e+00&	-2.37e-04&	9.93e-01&	5.35e-03	\\
6&	2.32e-03&	1.00e+00&	-3.01e-04&	9.94e-01&	2.62e-03	\\
\hline
\end{tabular}
\caption{Additive Schwarz for convection-diffusion:  $N_x = N_y = 20, \ \beta =  0.1$.}
\label{tab:add_Schwarz_example_convection_diffusion}
\end{table}

The differences between these two configurations are not as dramatic as in the case of multiplicative Schwarz. Furthermore, both $4 \times 1$ and $1 \times 4$ configurations had essentially converged after 6 iterations of multiplicative Schwarz. This is far from true for additive Schwarz.

\subsection{Two stage solution strategy for Poisson's equation}
\label{sec:add_Schwarz_example_two_stage_strategy}

\subsubsection{Dominant discretization error}
\label{sec:add_num_exp_disc_err_dom}

We repeat the problem in \S \ref{sec:mult_Schwarz_num_exp_disc_err_dom} and the results are shown in Table \ref{tab:add_Schwarz_example_two_stage_strategy_discretization}. We observe the expected reduction in discretization error in subdomain 4, but the reduction in total error is not as dramatic as in the multiplicative Schwarz case. There is less cancellation between discretization errors of opposite sign  following local mesh refinement, and the iteration error is much larger for additive Schwarz. However, after mesh refinement in subdomain 4 and six iterations, the iteration error makes the largest contribution to the total error.

\begin{table}[H]
\centering
\begin{tabular}{||c|c|c|c|c|c|c||}
\hline
Stage & Num. vertices  & Est. Err. & $\gamma$ &  $e_D^{\{K\}}$ & $\gamma_D$  & $e_I^{\{K\}}$  \\
\hline
1 & 121 & 3.24e-03 & 9.88e-01 & 2.48e-03 & 9.90e-01 & 7.61e-04  \\
2 & 253 & 1.24e-03 & 1.00e+00 & 4.79e-04 & 1.04e+00 & 7.60e-04  \\
\hline
Stage & Num. vertices  & $i$ & 1 & 2 & 3 & 4   \\
\hline
1 & 121 & $S_i^K$ & 6.82e-05 & -3.76e-04 & -3.76e-04 & 3.16e-03 \\
2 & 253 & $S_i^K$ & 4.13e-05 & -1.78e-04 & -1.78e-04 & 7.93e-04 \\
\hline
\end{tabular}
\caption{Two stage solution strategy using additive Schwarz to solve Poisson's equation: $\beta = 0.2, \ K=6$.}
\label{tab:add_Schwarz_example_two_stage_strategy_discretization}
\end{table}

\subsubsection{Dominant iteration error}
\label{sec:add_num_exp_it_err_dom}

The results upon repeating the problem in \S \ref{sec:mult_Schwarz_num_exp_it_err_dom} are shown in Table \ref{tab:add_Schwarz_example_two_stage_strategy_iteration}. Increasing the overlap reduces the iteration error, but not as effectively as for multiplicative Schwarz, and after only two iterations the iteration error remains significantly larger than the discretization error.

\begin{table}[H]
\centering
\begin{tabular}{||c|c|c|c|c|c|c|c|c||}
\hline
$N_x$ & $N_y$ & $\beta$ & $K$ & Est. Err. & $\gamma$ &  $e_D^{\{K\}}$ & $\gamma_D$  &    $e_I^{\{K\}}$  \\
\hline
40 & 40 & 0.05 & 2 & 1.05e-02&	1.00e+00&	1.19e-04&	1.00e+00&	1.04e-02	\\
40 & 40 & 0.2  & 2 & 8.27e-03&	1.00e+00&	1.15e-04&	1.00e+00&	8.15e-03	\\
\hline
\end{tabular}
\caption{Two stage solution strategy using additive Schwarz to solve Poisson's equation. Stage 1: $\beta = 0.05$, stage 2: $\beta = 0.2$.}
\label{tab:add_Schwarz_example_two_stage_strategy_iteration}
\end{table}


\section{Details of analysis: algorithm reformulation, technical lemmas and proofs}
\label{sec:proofs}

\subsection{Analogy with algebraic Gauss-Seidel iteration}

It is helpful to consider an algebraic analog of multiplicative Schwarz overlapping domain decomposition in order to provide insight in to the unusual forms of equations \eqref{eq:multiplicative_adj} and \eqref{eq:def_tau_i_Q} and Theorem \ref{thm:multiplicative_discretization_error}. Consider solving the algebraic linear system $Mx=b$, where $M$ is a $p \times p$ matrix, using $K$ Gauss-Seidel iterations. (Here we choose $p=4$ and $K=5$.)

We decompose the matrix $M$ as a sum of strictly lower triangular, diagonal and a strictly upper triangular matrices as $M=L+D+U$ and solve
$$
(L+D)x^{\{k+1\}} = b-Ux^{\{k\}}, k=0, 1, \dots .
$$
Let $A=(L+D)$ and $B=U$, each of which are $p \times p$ matrices. The complete Gauss-Seidel iteration can be written as the following block lower triangular system
\begin{equation*}
C_{\hbox{gs}} \bfx = \bfb,
\end{equation*}
where
\begin{equation*}
C_{\hbox{gs}} = \left[
\begin{matrix}
A &   0  &   0  &   0  &   0   \\
B & A &   0  &   0  &   0   \\
 0   & B & A &   0  &   0   \\
 0   &   0  & B & A &   0   \\
 0   &   0  &   0  & B & A
\end{matrix}
\right ].
\end{equation*}
$C_{\hbox{gs}}$  has a block bandwidth of two. Here,
\begin{equation*}
\bfx^\top = \left( {x^{\{1\}}}^\top, {x^{\{2\}}}^\top, {x^{\{3\}}}^\top, {x^{\{4\}}}^\top, {x^{\{5\}}}^\top \right)
\quad\hbox{and}\quad
\bfb^\top = \left( b^\top, b^\top, b^\top, b^\top, b^\top \right).
\end{equation*}
The corresponding adjoint problem is
$C_{\hbox{gs}}^\top \bfphi = \bfpsi$
where $\bfpsi^\top= \left( 0, 0, 0, 0, \psi^\top \right)$,
and
\begin{equation*}
C_{\hbox{gs}}^\top= \left[
\begin{matrix}
A^\top & B^\top &     0     &   0       &   0       \\
   0      & A^\top & B^\top &   0       &   0       \\
   0      &     0     & A^\top & B^\top &   0       \\
   0      &     0     &     0     & A^\top & B^\top \\
   0      &     0     &     0     &   0       & A^\top
\end{matrix}
\right].
\end{equation*}
Note that the adjoint is non-zero only for the final solutions $x^{\{K\}}=x^{\{5\}}$. Let
\begin{equation*}
\bfphi^\top= \left( \bfphi_1, \bfphi_2, \bfphi_3, \bfphi_4, \bfphi_5 \right)^\top.
\end{equation*}
The adjoint problems are
\begin{equation*}
\begin{aligned}
&A^\top \bfphi_5 = \psi, \; \\
&A^\top \bfphi_4 = -B^\top \bfphi_5,\;
A^\top \bfphi_3 = -B^\top \bfphi_4,\;
A^\top \bfphi_2 = -B^\top \bfphi_3,\;
A^\top \bfphi_1 = -B^\top \bfphi_2,
\end{aligned}
\end{equation*}
which can be solved sequentially by backward substitution. Recall that $A$ is lower triangular and $B$ is strictly upper triangular, hence
\begin{equation*}
C_{\hbox{gs}} = \left[
\begin{array}{cccc:cccc:c}
A_{11} &        &        &        &        &        &        &        &        \\
A_{21} & A_{22} &        &        &        &        &        &        &        \\
A_{31} & A_{32} & A_{33} &        &        &        &        &        &        \\
A_{41} & A_{42} & A_{43} & A_{44} &        &        &        &        &        \\
\hdashline
       & A_{12} & A_{13} & A_{14} & A_{11} &        &        &        &        \\
       &        & A_{23} & A_{24} & A_{21} & A_{22} &        &        &        \\
       &        &        & A_{34} & A_{31} & A_{32} & A_{33} &        &        \\
       &        &        &        & A_{41} & A_{42} & A_{43} & A_{44} &        \\
\hdashline
       &        &        &        &        &        &        & \vdots & \dots
\end{array} \right],
\end{equation*}
\begin{equation*}
C_{\hbox{gs}}^\top = \left[
\begin{array}{cccc:cccc:c}
A_{11}^\top & A_{21}^\top & A_{31}^\top & A_{41}^\top &             &             &             &             &        \\
            & A_{22}^\top & A_{32}^\top & A_{42}^\top & A_{12}^\top &             &             &             &        \\
            &             & A_{33}^\top & A_{43}^\top & A_{13}^\top & A_{23}^\top &             &             &        \\
            &             &             & A_{44}^\top & A_{14}^\top & A_{24}^\top & A_{34}^\top &             &        \\
\hdashline
            &             &             &             & A_{11}^\top & A_{21}^\top & A_{31}^\top & A_{41}^\top &        \\
            &             &             &             &             & A_{22}^\top & A_{32}^\top & A_{42}^\top &        \\
            &             &             &             &             &             & A_{33}^\top & A_{43}^\top &        \\
            &             &             &             &             &             &             & A_{44}^\top &        \\
 \hdashline
           &             &             &             &             &             &             & \vdots      & \dots
\end{array} \right],
\end{equation*}
and the adjoint equations within each block can also be solved via backward substitution. The adjoint equations are therefore
\begin{eqnarray}
&&A_{ii}^\top \phi_i^{[K]} = \psi_i -\sum_{j>i} A^\top_{ji}\phi_j^{[K]},
  \quad i=1, \dots p, \label{eq:algebraic_GS_adjoint_a} \\
&&A_{ii}^\top \phi_i^{[k]} = -\sum_{j<i} A^\top_{ji}\phi_j^{[k+1]} - \sum_{j>i} A^\top_{ji}\phi_j^{[k]},
\quad  i=1, \dots p, \quad k = K-1, \dots, 1. \label{eq:algebraic_GS_adjoint_b}
\end{eqnarray}
The form of the adjoint problems in \eqref{eq:algebraic_GS_adjoint_a} and \eqref{eq:algebraic_GS_adjoint_b} mimic those in equations \eqref{eq:multiplicative_adj} and \eqref{eq:def_tau_i_Q}. The sum on the RHS of \eqref{eq:algebraic_GS_adjoint_a} represents the additional adjoint problems that must be solved to estimate effect of errors made while solving forward problems during the final ($Kth$) iteration. We call these \emph{within iteration} transfer errors. The first sum on the RHS of \eqref{eq:algebraic_GS_adjoint_b} represents the additional adjoint problems that must be solved to estimate the effect of errors made while solving forward problems during the previous iteration. We call these \emph{between iteration} transfer errors. The second sum on the RHS of \eqref{eq:algebraic_GS_adjoint_b} again represents additional adjoint problems to estimate \emph{within iteration} transfer errors. These two distinct types of transfer error were earlier identified in the context of operator decomposition approaches to coupled semilinear elliptic systems in \cite{carey2013posteriori}.  Since $C_{\hbox{gs}} \bfx = \bfb$ is just a $Kp \times Kp$ linear system,
\begin{equation}
\label{eq:algebraic_GS_error_representation}
(\bfe,\bfpsi) = (\calR,\bfphi)
\end{equation}
where
\begin{equation*}
  \calR = \bfb - C_{\hbox{gs}} \hat{\bfx}
\end{equation*}
for approximate solution $\hat{\bfx}$. The error in the quantity of interest can be expressed as an inner products of two vectors of length $Kp$, and \eqref{eq:algebraic_GS_error_representation} mimics the result in equation \eqref{eq:multiplicative_disc_err_rep}.

\subsection{Details of analysis of multiplicative Schwarz algorithm}
\label{sec:multiplicative_Schwarz_error_analysis}

\subsubsection{Reformulation of the algorithm}
Algorithm \ref{alg:multiplicative_basic} is not amenable to adjoint based analysis since the affine solution space $H^{1}_{D_{k}}(\Omega_i)$ changes at every iteration. We reformulate the algorithm  by using a standard lifting technique to account for this in Algorithm~\ref{alg:multiplicative_reformulated}. We set
\begin{equation}
\label{eq:def_w_mul_sch}
\widetilde{u}^{\{k+i/p\}} = w^{\{k+i/p\}}  + u^{\{k+(i-1)/p\}} \quad \text{ on } \Omega_i,
\end{equation}
where $w^{\{k+i/p\}} \in H_0^{1}(\Omega_i)$.

\begin{algorithm}[H]
\caption{Reformulated overlapping multiplicative Schwarz}
\label{alg:multiplicative_reformulated}
\begin{algorithmic}
\State Given $u^{\{0\}}$ defined on $\Omega$
\For{$k=0, 1, 2, \dots, K-1$ }
  \For{$i= 1, 2, \dots, p$ }
    \State Find $w^{\{k+i/p\}} \in H_0^{1}(\Omega_1)$ such that \\
    \begin{equation}
    \label{eq:multiplicative_wkph}
        a_i \left( w^{\{k+i/p\}},v \right)= l_i(v)- a_i \left(u^{\{k + (i-1)/p\}},v \right), \quad \forall v \in H_0^{1}(\Omega_i).	
    \end{equation}
    \State Let
    \begin{equation}
    \label{eq:multiplicative_ukph}
      u^{\{k+i/p\}} = \left \{ \begin{gathered} \begin{aligned}
      &u^{\{k+ (i-1)/p\}}+w^{\{k+i/p\}}, \; &&\hbox{on } \overline{\Omega}_i, \\
      &u^{\{k+ (i-1)/p\}},                  &&\hbox{on } \Omega \backslash \overline{\Omega}_i.
      \end{aligned} \end{gathered}  \right .
    \end{equation}
  \EndFor
\EndFor
\end{algorithmic}
\end{algorithm}

There is an equivalent reformulation of the discrete Algorithm~\ref{alg:multiplicative_reformulated} and we denote the unknown solutions  as $W^{\{k+i/p\}}$  belonging to the spaces $V_{i,h,0} \subset H_0^1(\Omega_i)$. The solutions $W^{\{k+i/p\}}$  are defined formally for the analysis but are not computed in practice.

To distinguish between different solutions (true, analytical, discrete) we use the notation in Table \ref{tab:multiplicative_Schwarz_spaces}.

\begin{table}[H]
\centering
\begin{tabular}{||c|c|c|l||}
\hline
Notation &  Formula       &Space & Meaning \\
\hline
$u$                        && $H^1_0(\Omega)$              & True solution \\
$u^{\{k\}}$                && $H^{1}_{0}(\Omega_i)$        & Global analytic solution at iteration $k$ \\
${U}^{\{k\}}$              && $V_{h}$                      & Global discrete solution at iteration $k$ \\
\hline
$\widetilde{u}^{\{k + i/p\}}$ && $H^{1}_{D_{k}}(\Omega_i)$  & Analytic solutions on $\Omega_i$ at iteration $k$ \\
$\widetilde{U}^{\{k+ i/p\}}$  && $V_{i,h}^{k}$              & Discrete solutions on $\Omega_i$ at iteration $k$ \\
\hline
$w^{\{k + i/p\}}$            && $H^{1}_{0}(\Omega_i)$      & Analytic solns on $\Omega_i$ with homogen. bcs at iteration $k$ \\
$W^{\{k + i/p\}}$            && $V_{i,h,0}$                & Discrete solns on $\Omega_i$ with homogen. bcs at iteration $k$ \\
\hline
$e^{\{k\}}$         &$u-U^{\{k\}}$                    & $H^{1}_{0}(\Omega)$      & Total error \\
$e_I^{\{k\}}$       &$u-u^{\{k\}}$                    & $H^{1}_{0}(\Omega)$      & Global iteration error at iteration $k$  \\
$e_D^{\{k\}}$       &$u^{\{k\}}-U^{\{k\}}$            & $H^{1}_{0}(\Omega)$      & Global discretization error at iteration $k$ \\
\hline
$e_{W}^{\{k + i/p\} }$  &$w^{\{k+i/p\}}-W^{\{k+i/p\}}$ & $H^{1}_{0}(\Omega_i)$   & Discretization error on $\Omega_i$ with homogen. bcs at it. $k$ \\
\hline
\end{tabular}
\caption{Multiplicative Schwarz: notation for different solutions and their spaces. }
\label{tab:multiplicative_Schwarz_spaces}
\end{table}

\subsubsection{Technical lemmas}
\label{sec:multiplicative_Schwarz_lemmas}

Let $e_W^{\{k\}} = w^{\{k\}} - W^{\{k\}}$. By \eqref{eq:def_w_mul_sch} we have
\begin{equation}
\label{eq:multiplicative_err_tilde_hat_rel}
\begin{aligned}
e_W^{\{k+i/p\}} &= e_D^{\{k+i/p\}} - e_D^{\{k+(i-1)/p\}} \quad \text{on } \Omega_i.
\end{aligned}	
\end{equation}
Note that $e_W^{\{k+i/p\}} = 0$ on $\partial \Omega_i$. We set $e_W^{\{k+i/p\}} = 0$ on $\Omega \setminus \Omega_i$.


\begin{lemma}[Error in QoI in terms of discretization errors with homogeneous bcs]
\label{lem:multiplicative_disc_err_decomp}

The discretization error in the QoI is
\begin{equation}
\label{eq:mul_sch_err_disc_exp}
\left(e_D^{\{K\}},\psi\right) =  \sum_{k=0}^{K-1} \sum_{i=1}^p \sum_{j=1}^p \left ( e_W^{\{k+i/p\}}, \psi_j \right )_{ij}.
\end{equation}
\end{lemma}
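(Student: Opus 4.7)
The plan is to telescope the global discretization error $e_D^{\{K\}}$ into a sum of the local, homogeneous-boundary errors $e_W^{\{k+i/p\}}$, and then apply the partition-of-unity identity from Lemma~\ref{lem:partition of unity}.

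First I would observe that, from the update rule \eqref{eq:multiplicative_basic_global} (and its exact analogue for the discrete iterates), both $u^{\{k+i/p\}}$ and $U^{\{k+i/p\}}$ coincide with their respective predecessors $u^{\{k+(i-1)/p\}}$ and $U^{\{k+(i-1)/p\}}$ on $\Omega\setminus\overline{\Omega}_i$. Hence $e_D^{\{k+i/p\}}-e_D^{\{k+(i-1)/p\}}$ vanishes on $\Omega\setminus\overline{\Omega}_i$, and by \eqref{eq:multiplicative_err_tilde_hat_rel} it equals $e_W^{\{k+i/p\}}$ on $\overline{\Omega}_i$. Extending $e_W^{\{k+i/p\}}$ by zero to all of $\Omega$ (as stated just before the lemma) therefore gives the pointwise identity
\begin{equation*}
e_D^{\{k+i/p\}}-e_D^{\{k+(i-1)/p\}} = e_W^{\{k+i/p\}} \quad \text{on } \Omega.
\end{equation*}

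Next I would telescope the sequence $\{e_D^{\{k+i/p\}}\}$ starting from $e_D^{\{0\}}=0$ up to $e_D^{\{K\}}$, indexed by $k=0,\dots,K-1$ and $i=1,\dots,p$. This yields
\begin{equation*}
e_D^{\{K\}} \;=\; \sum_{k=0}^{K-1}\sum_{i=1}^p \left(e_D^{\{k+i/p\}}-e_D^{\{k+(i-1)/p\}}\right) \;=\; \sum_{k=0}^{K-1}\sum_{i=1}^p e_W^{\{k+i/p\}}.
\end{equation*}

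Finally I would take the $L_2(\Omega)$ inner product with $\psi$, invoke Lemma~\ref{lem:partition of unity} to split $\psi=\sum_{j=1}^p\psi_j$, and interchange the finite sums. Since $e_W^{\{k+i/p\}}$ is supported in $\overline{\Omega}_i$ and $\psi_j$ is supported in $\overline{\Omega}_j$, each term $\bigl(e_W^{\{k+i/p\}},\psi_j\bigr)$ over $\Omega$ reduces to the integral over $\Omega_i\cap\Omega_j$, i.e.\ equals $\bigl(e_W^{\{k+i/p\}},\psi_j\bigr)_{ij}$. Combining these gives \eqref{eq:mul_sch_err_disc_exp}.

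The main subtlety, rather than any obstacle, is making the support bookkeeping explicit: one must verify that the zero-extension of $e_W^{\{k+i/p\}}$ is consistent with the telescoping identity, and that the reduction from an integral over $\Omega$ to an integral over $\Omega_i\cap\Omega_j$ is justified by the joint support of $e_W^{\{k+i/p\}}$ and $\psi_j$. The remaining manipulations are routine.
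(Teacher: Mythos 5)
Your proof is correct and takes essentially the same route as the paper: the identity $e_D^{\{k+i/p\}}-e_D^{\{k+(i-1)/p\}}=e_W^{\{k+i/p\}}$ (valid on all of $\Omega$ thanks to the zero extension and the update rule), telescoping over $k$ and $i$ with $e_D^{\{0\}}=0$, and then the partition of unity with the support argument reducing $(\cdot,\cdot)$ to $(\cdot,\cdot)_{ij}$. The only cosmetic difference is that you telescope at the level of functions before pairing with $\psi$, whereas the paper telescopes the inner products $\left(e_D^{\{\cdot\}},\psi_j\right)$ for each fixed $j$ and sums over $j$ at the end; the mathematical content is identical.
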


\begin{proof}
From equation \eqref{eq:multiplicative_err_tilde_hat_rel} and the fact that $\psi_j = 0$ on $\Omega \setminus \Omega_j$ for fixed $j$ we have
\begin{equation*}
\begin{aligned}
\left(e_D^{\{K\}},\psi_j\right) &= \left(e_D^{\{K-1+p/p\}},\psi_j\right) \\
&= \left(e_W^{\{K-i+p/p\}},\psi_j\right)_{pj} + \left(e_D^{\{K-1+(p-1)/p\}},\psi_j\right) \\
&= \left(e_W^{\{K-i+p/p\}},\psi_j\right)_{pj} + \left(e_W^{\{K-i+(p-1)/p\}},\psi_j\right)_{(p-1)j} +  \left(e_D^{\{K-1+(p-2)/p\}},\psi_j\right). \\
\end{aligned}
\end{equation*}
Continuing,
\begin{equation*}
\left(e_D^{\{K\}},\psi_j\right) =  \left(e_D^{\{K-1\}},\psi_j\right) + \sum_{i=1}^p \left(e_W^{\{K - 1 + i/p\}},\psi_j\right)_{ij}.
\end{equation*}
This is a recursive relation for $e_D^{\{K\}}$. Expanding $\left(e_D^{\{K-1\}},\psi_j\right)$ as above leads to
\begin{equation*}
\begin{aligned}
\left(e_D^{\{K\}},\psi_j\right) &= \sum_{k=0}^{K-1} \sum_{i=1}^p \ \left(e_W^{\{k+i/p\}},\psi_j\right)_{ij}.
\end{aligned}
\end{equation*}
Summing over $j=1, \dots, p$,
\begin{equation*}
\sum_{j=1}^p \left(e_D^{\{K\}},\psi_j\right) = \sum_{k=0}^{K-1} \sum_{i=1}^p \sum_{j=1}^p \ \left(e_W^{\{k+i/p\}},\psi_j\right)_{ij}.
\end{equation*}
\end{proof}


\begin{lemma}[Bilinear form with discretization errors with homogeneous bcs]
\label{lem:multiplicative_etlil_expansion}
For any $v \in H^{1}_{D_{k}}(\Omega_i)$ we have
\begin{equation}
\label{eq:multiplicative_etil_lev_0}
  a_i\left(e_W^{\{i/p\}},v\right) = a_i\left(e_D^{\{i/p\}},v\right) - \sum_{r = 1}^{i-1}a_{ir}\left(e_W^{\{r/p\}},v\right).
\end{equation}
and for $k \geq 1$,
\begin{equation}
\label{eq:multiplicative_etil_lev_k}
  a_i\left(e_W^{\{k+i/p\}},v\right) = a_i\left(e_D^{\{k + i/p\}},v\right) - a_i\left(e_D^{\{k -1 + i/p\}},v\right)
  -\sum_{r = 1}^{i-1} a_{ir}\left(e_W^{\{k + r/p\}},v\right) - \sum_{r=i+1}^{p}a_{ir}\left(e_W^{\{k -1 + r/p\}} ,v\right).
\end{equation}
\end{lemma}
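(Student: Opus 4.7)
The plan is to derive both identities from the pointwise decomposition $e_W^{\{k+i/p\}} = e_D^{\{k+i/p\}} - e_D^{\{k+(i-1)/p\}}$ on $\Omega_i$ (equation \eqref{eq:multiplicative_err_tilde_hat_rel}) together with the fact that each $e_W^{\{m+r/p\}}$ is extended by zero outside $\Omega_r$. Applying the bilinear form $a_i(\cdot,v)$ to this identity immediately gives
\[
a_i\bigl(e_W^{\{k+i/p\}},v\bigr) = a_i\bigl(e_D^{\{k+i/p\}},v\bigr) - a_i\bigl(e_D^{\{k+(i-1)/p\}},v\bigr),
\]
so the task reduces to rewriting the second term on the right-hand side in terms of the $e_W$'s from earlier substeps.

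To do so, I would first observe from the update rule \eqref{eq:multiplicative_ukph} (and its discrete counterpart) that $u^{\{m+r/p\}}$ and $u^{\{m+(r-1)/p\}}$ agree off $\Omega_r$. Combined with the convention that $e_W^{\{m+r/p\}}=0$ off $\Omega_r$, this yields the global single-step recursion $e_D^{\{m+r/p\}} = e_D^{\{m+(r-1)/p\}} + e_W^{\{m+r/p\}}$ on all of $\Omega$. Telescoping this recursion then provides the expansion of $e_D^{\{k+(i-1)/p\}}$ needed. For $k=0$, the initial condition $e_D^{\{0\}}=0$ gives $e_D^{\{(i-1)/p\}} = \sum_{r=1}^{i-1} e_W^{\{r/p\}}$. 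For $k\geq 1$, telescoping backward from $k+(i-1)/p$ down to $k-1+i/p$ traverses substeps $r=i+1,\dots,p$ of iteration $k-1$ followed by substeps $r=1,\dots,i-1$ of iteration $k$, producing
\[
e_D^{\{k+(i-1)/p\}} = e_D^{\{k-1+i/p\}} + \sum_{r=i+1}^{p} e_W^{\{k-1+r/p\}} + \sum_{r=1}^{i-1} e_W^{\{k+r/p\}}.
\]

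Substituting and invoking once more that $e_W^{\{m+r/p\}}$ vanishes outside $\Omega_r$, each term of the form $a_i\bigl(e_W^{\{m+r/p\}},v\bigr)$ reduces to an integral over $\Omega_i\cap\Omega_r$, which by definition of $a_{ir}(\cdot,\cdot)$ is precisely $a_{ir}\bigl(e_W^{\{m+r/p\}},v\bigr)$. Collecting then produces \eqref{eq:multiplicative_etil_lev_0} in the $k=0$ case and \eqref{eq:multiplicative_etil_lev_k} in the $k\geq 1$ case. The main obstacle is purely bookkeeping: correctly separating the \emph{within-iteration} transfer contributions (indices $r<i$ in iteration $k$) from the \emph{between-iteration} transfer contributions (indices $r>i$ in iteration $k-1$), and verifying that the half-step indices line up at the pivot $k-1+i/p$. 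The algebraic Gauss--Seidel analogy given earlier in the paper provides a useful mental model for keeping this index accounting straight.
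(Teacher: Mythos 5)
Your proposal is correct and follows essentially the same route as the paper: both rest on the recursion $e_W^{\{m+r/p\}} = e_D^{\{m+r/p\}} - e_D^{\{m+(r-1)/p\}}$ together with the zero-extension of $e_W^{\{m+r/p\}}$ off $\Omega_r$, telescoped through the substeps to expand $e_D^{\{k+(i-1)/p\}}$ and localize each resulting term to $a_{ir}(\cdot,\cdot)$. The only cosmetic difference is that you telescope the identity at the function level before applying $a_i(\cdot,v)$, whereas the paper telescopes directly inside the bilinear form; you also write out the $k\ge 1$ case explicitly where the paper defers to ``a similar argument.''
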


\begin{proof}
By \eqref{eq:multiplicative_err_tilde_hat_rel} we have for $m < i$,
\begin{equation*}
\begin{aligned}
a_i\left(e_D^{\{m/p\}},v\right)
&=  a_i\left(e_D^{\{(m-1)/p\}},v\right) + a_{i,m}\left(e_W^{\{m/p\}},v\right),   \\
&=  a_i\left(e_D^{\{(m-2)/p\}},v\right) + a_{i,m-1}\left(e_W^{\{(m-1)/p\}},v\right) + a_{i,m}\left(e_W^{\{m/p\}},v\right).
\end{aligned}
\end{equation*}
where we use $e_W^{\{r/p\}} = 0$ on $\Omega \setminus \Omega_r$.
Continuing in this manner yields
\begin{equation}
\label{eq:partial_result_sum_hat_e_mul_sch}
a_i\left(e_D^{\{m/p\}},v\right)
= a_i\left(e_D^{\{0\}},v\right) + \sum_{r=1}^m a_{ir}\left(e_W^{\{r/p\}},v\right)
= \sum_{r=1}^m a_{ir}\left(e_W^{\{r/p\}},v\right),
\end{equation}
since $e_D^{\{0\}} = 0$. Again by \eqref{eq:multiplicative_err_tilde_hat_rel},
\begin{equation}
\label{eq:mul_sch_wide_e_one_expansion}
a_i\left(e_W^{\{i/p\}},v\right) = a_i\left(e_D^{\{i/p\}},v\right) - a_i\left(e_D^{\{(i-1)/p\}},v\right).
\end{equation}
Using \eqref{eq:partial_result_sum_hat_e_mul_sch} with $m=i-1$ with \eqref{eq:mul_sch_wide_e_one_expansion} leads to
\begin{equation*}
\begin{aligned}
a_i\left(e_W^{\{i/p\}},v\right) &= a_i\left(e_D^{\{i/p\}},v\right) - \sum_{r = 1}^{i-1}a_{ir}\left(e_W^{\{r/p\}},v\right),
\end{aligned}
\end{equation*}
thus showing \eqref{eq:multiplicative_etil_lev_0}. A similar argument shows \eqref{eq:multiplicative_etil_lev_k} for $k \geq 1$.


\end{proof}



\begin{lemma}[Sums of bilinear form with discretization errors with homogeneous bcs]
\label{lem:multiplicative_etil_sum}
For $0 \leq Q \leq K-1$ we have
\begin{equation} \label{eq:multiplicative_etil_sum}
\sum_{k=0}^{Q}a_i\left(e_W^{\{k+i/p\}},v\right)
= a_i\left(e_D^{\{Q+i/p\}},v\right)
  - \sum_{k=0}^{Q} \sum_{r=1}^{i-1} a_{ir}\left(e_W^{\{k+r/p\}},v\right)
  - \sum_{k=0}^{Q-1} \sum_{r=i+1}^{p} a_{ir}\left(e_W^{\{k+r/p\}},v\right).
\end{equation}
\end{lemma}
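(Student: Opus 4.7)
The plan is to prove \eqref{eq:multiplicative_etil_sum} by finite induction on $Q$, using the identities in the preceding lemma (equations \eqref{eq:multiplicative_etil_lev_0} and \eqref{eq:multiplicative_etil_lev_k}) as the base case and the engine of the inductive step, respectively. The whole argument is bookkeeping: the analytical content was already extracted into per-iteration form in Lemma 3, and here we simply sum those identities so that the $e_D$ terms telescope.

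For the base case $Q=0$, the claimed identity reduces to \eqref{eq:multiplicative_etil_lev_0} directly: the double sum $\sum_{k=0}^{-1}\sum_{r=i+1}^{p}$ is empty by convention, while $\sum_{k=0}^{0}\sum_{r=1}^{i-1} a_{ir}(e_W^{\{k+r/p\}},v)$ coincides with the correction $\sum_{r=1}^{i-1} a_{ir}(e_W^{\{r/p\}},v)$ appearing on the right of \eqref{eq:multiplicative_etil_lev_0}. For the inductive step with $Q\geq 1$, I would assume \eqref{eq:multiplicative_etil_sum} for $Q-1$ and add $a_i(e_W^{\{Q+i/p\}},v)$ to both sides, expanding this new left-hand contribution via \eqref{eq:multiplicative_etil_lev_k} with $k=Q$. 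The telescoping term $-a_i(e_D^{\{Q-1+i/p\}},v)$ supplied by \eqref{eq:multiplicative_etil_lev_k} cancels the $a_i(e_D^{\{Q-1+i/p\}},v)$ from the inductive hypothesis, leaving $a_i(e_D^{\{Q+i/p\}},v)$ as the sole $e_D$-contribution. The remaining new summands $-\sum_{r=1}^{i-1} a_{ir}(e_W^{\{Q+r/p\}},v)$ and $-\sum_{r=i+1}^{p} a_{ir}(e_W^{\{Q-1+r/p\}},v)$ extend the first double sum to range over $k=0,\dots,Q$ and (after reindexing $k\mapsto k-1$) the second to range over $k=0,\dots,Q-1$, matching the target identity.

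There is no genuine obstacle here — the only thing to watch is the off-by-one bookkeeping: the second double sum on the right of \eqref{eq:multiplicative_etil_sum} stops at $Q-1$ rather than $Q$ precisely because the $\sum_{r=i+1}^{p}$ contributions in \eqref{eq:multiplicative_etil_lev_k} involve $e_W^{\{k-1+r/p\}}$, shifted down by one iteration, while the $\sum_{r=1}^{i-1}$ contributions involve $e_W^{\{k+r/p\}}$ at the current iteration. An equivalent and equally short route is to sum \eqref{eq:multiplicative_etil_lev_0} together with \eqref{eq:multiplicative_etil_lev_k} for $k=1,\dots,Q$ directly and observe the same telescoping of the $e_D$ terms; I would prefer whichever presentation fits the surrounding exposition best.
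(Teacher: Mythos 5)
Your argument is correct and is essentially the paper's own proof: the paper sums \eqref{eq:multiplicative_etil_lev_0} and \eqref{eq:multiplicative_etil_lev_k} for $k=1,\dots,Q$ directly and telescopes the $a_i(e_D^{\{\cdot\}},v)$ terms, which is exactly the alternative you mention at the end, and your inductive packaging of the same telescoping is only a cosmetic difference. The off-by-one bookkeeping you flag (the second double sum stopping at $Q-1$ because the $r>i$ corrections carry the index $k-1$) is handled identically in the paper.
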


\begin{proof}
By Lemma~\ref{lem:multiplicative_etlil_expansion},
\begin{equation*}
\begin{aligned}
\sum_{k=0}^{Q}a_i & \left(e_W^{\{k+i/p\}},v\right)
= \sum_{k=1}^{Q}a_i\left(e_W^{\{k+i/p\}},v\right) + a_i\left(e_W^{\{i/p\}},v\right)\\
&= \sum_{k=1}^{Q} \left\{a_i\left(e_D^{\{k + i/p\}},v\right) - a_i\left(e_D^{\{k -1 + i/p\}},v\right)
  -\sum_{r = 1}^{i-1} a_{ir}\left(e_W^{\{k + r/p\}},v\right)
  -\sum_{r=i+1}^{p} a_{ir}\left(e_W^{\{k -1 + r/p\}} ,v\right)\right\}  \\
&\qquad + a_i\left(e_D^{\{i/p\}},v\right) - \sum_{r = 1}^{i-1}a_{ir}\left(e_W^{\{r/p\}},v\right) \\
&= \sum_{k=1}^{Q} \left\{a_i\left(e_D^{\{k + i/p\}},v\right) - a_i\left(e_D^{\{k -1 + i/p\}},v\right) \right\} +  a_i\left(e_D^{\{i/p\}},v\right)\\
&\qquad -\sum_{k=1}^{Q} \sum_{r = 1}^{i-1} a_{ir}\left(e_W^{\{k + r/p\}},v\right)
        -\sum_{r = 1}^{i-1}a_{ir}\left(e_W^{\{r/p\}},v\right)
        -\sum_{k=1}^{Q} \sum_{r=i+1}^{p} a_{ir}\left(e_W^{\{k -1 + r/p\}} ,v\right) \\
&=a_i\left(e_D^{\{Q+i/p\}},v\right)
  - \sum_{k=0}^{Q} \sum_{r=1}^{i-1} a_{ir}\left(e_W^{\{k+r/p\}},v\right)
  - \sum_{k=0}^{Q-1} \sum_{r=i+1}^{p} a_{ir}\left(e_W^{\{k+r/p\}},v\right).
\end{aligned}
\end{equation*}
\end{proof}




\begin{lemma}[Sum of RHS of the adjoint equations over iterations]
\label{lem:mul_sch_ana_lem_5}
Let $2 \leq M \leq p+1$ and $R = M-1$ and $0 \leq Q < K$. Then
\begin{equation*}
\begin{aligned}
&\sum_{k=0}^{Q} \tau_{R}^{Q}\left(e_W^{\{k + R/p\}}\right)
     - \sum_{k=0}^{Q} \sum_{j=M}^{p}  a_{Rj}\left(e_W^{\{k + R/p\}}, \phi^{[Q+j/p]}\right) \\
&=a_R\left(e_D^{\{Q+R/p\}}, \phi^{[Q+R/p]}\right)
     - \sum_{k=0}^{Q} \sum_{i=1}^{R-1} a_{iR}\left(e_W^{\{k+i/p\}}, \phi^{[Q+R/p]}\right)
     - \sum_{k=0}^{Q-1} \sum_{i=M}^{p} a_{iR}\left(e_W^{\{k+i/p\}}, \phi^{[Q+R/p]}\right).
\end{aligned}
\end{equation*}
\end{lemma}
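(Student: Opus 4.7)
The plan is to start from the right-hand side of the adjoint equations \eqref{eq:multiplicative_adj}--\eqref{eq:def_tau_i_Q} and rewrite the LHS of the claim as a single telescoped bilinear form, which can then be processed by the bilinear-form-summation identity already proved in Lemma~\ref{lem:multiplicative_etil_sum}.

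First, I would apply the adjoint equation with test function $v = e_W^{\{k+R/p\}}$ at subdomain index $R = M-1$. For $1 \leq R < p$ this gives
\[
a_R\!\left(e_W^{\{k+R/p\}},\phi^{[Q+R/p]}\right)
= \tau_R^Q\!\left(e_W^{\{k+R/p\}}\right)
- \sum_{j=R+1}^{p} a_{Rj}\!\left(e_W^{\{k+R/p\}},\phi^{[Q+j/p]}\right),
\]
and the boundary case $R = p$ (i.e.\ $M = p+1$) is consistent because the sum $\sum_{j=M}^{p}(\cdot)$ is then empty and the adjoint equation reduces to $a_p(v,\phi^{[Q+1]}) = \tau_p^Q(v)$. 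Using $M = R+1$ and summing this identity over $k = 0,1,\dots,Q$ rearranges to
\[
\sum_{k=0}^{Q}\tau_R^Q\!\left(e_W^{\{k+R/p\}}\right)
- \sum_{k=0}^{Q}\sum_{j=M}^{p} a_{Rj}\!\left(e_W^{\{k+R/p\}},\phi^{[Q+j/p]}\right)
= \sum_{k=0}^{Q} a_R\!\left(e_W^{\{k+R/p\}},\phi^{[Q+R/p]}\right),
\]
which is exactly the LHS of the lemma.

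Next I would invoke Lemma~\ref{lem:multiplicative_etil_sum} (sums of the bilinear form) with $i = R$ and test function $v = \phi^{[Q+R/p]} \in H^1_0(\Omega_R)$. This substitution immediately expands the summed bilinear form on the right of the previous display as
\[
a_R\!\left(e_D^{\{Q+R/p\}},\phi^{[Q+R/p]}\right)
- \sum_{k=0}^{Q}\sum_{r=1}^{R-1} a_{Rr}\!\left(e_W^{\{k+r/p\}},\phi^{[Q+R/p]}\right)
- \sum_{k=0}^{Q-1}\sum_{r=R+1}^{p} a_{Rr}\!\left(e_W^{\{k+r/p\}},\phi^{[Q+R/p]}\right).
\]
Relabeling the dummy index $r \to i$ and observing that $a_{Rr} = a_{rR}$ (both are restrictions of the original bilinear form to the overlap $\Omega_R \cap \Omega_r$), together with $\sum_{r=R+1}^{p} = \sum_{i=M}^{p}$, produces exactly the RHS of the lemma.

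The only real care required is in the bookkeeping: making sure the adjoint-equation application covers the edge case $R = p$ (empty sum), and that Lemma~\ref{lem:multiplicative_etil_sum} is legitimately applied with test function $\phi^{[Q+R/p]}$, which lies in $H^1_0(\Omega_R)$ by construction of the adjoint problem \eqref{eq:multiplicative_adj}. Beyond that, the proof is a direct composition of the adjoint equation with the previously established summation identity, so I do not anticipate any substantive difficulty.
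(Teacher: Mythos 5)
Your proof is correct and takes essentially the same route as the paper's: test the adjoint equation \eqref{eq:multiplicative_adj} with $v = e_W^{\{k+R/p\}}$, sum over $k=0,\dots,Q$, and expand the resulting $\sum_{k=0}^{Q} a_R\left(e_W^{\{k+R/p\}},\phi^{[Q+R/p]}\right)$ via Lemma~\ref{lem:multiplicative_etil_sum}. Your explicit treatment of the $R=p$ edge case and of the relabeling $a_{Rr}=a_{rR}$ (both denote the restriction to $\Omega_R\cap\Omega_r$) is in fact slightly more careful than the paper's two-line argument, which elides both points.
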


\begin{proof}
From the adjoint equation \eqref{eq:multiplicative_adj} we have
\begin{equation}
\label{eq:mul_lem_5_eq_1}
a_{R}\left(e_W^{\{k + R/p\}}, \phi^{[Q+R/p]}\right)	
=\tau_{i}^{Q}\left(e_W^{\{k + R/p\}}\right) - \sum_{j=M}^{p}  a_{Ri}\left(e_W^{\{k + R/p\}}, \phi^{[Q+j/p]}\right).
\end{equation}
From Lemma~\ref{lem:multiplicative_etil_sum},
\begin{equation}
\label{eq:mul_lem_5_eq_2}
\begin{aligned}
&\sum_{k=0}^{Q}	a_{R}\left(e_W^{\{k + R/p\}}, \phi^{[Q+R/p]}\right)	 \\
&= a_R\left(e_D^{\{Q+R/p\}}, \phi^{[Q+R/p]}\right)
  - \sum_{k=0}^{Q} \sum_{i=1}^{R-1} a_{iR}\left(e_W^{\{k+i/p\}}, \phi^{[Q+R/p]}\right)
  - \sum_{k=0}^{Q-1} \sum_{i=M}^{p} a_{iR}\left(e_W^{\{k+i/p\}}, \phi^{[Q+R/p]}\right).
\end{aligned}
\end{equation}
Combining \eqref{eq:mul_lem_5_eq_1} and \eqref{eq:mul_lem_5_eq_2} proves the result.

\end{proof}


\begin{lemma}[Sum of RHS of the adjoint equations over iterations and subdomains]
\label{lem:ms_eK_a}
Let $ 1 \leq M \leq p+1$ and $0 \leq Q < K$. Then,
\begin{equation}
\label{eq:mul_sch_lem_6_eq_1}
\begin{aligned}
I= \sum_{k=0}^{Q} \sum_{i=1}^{p} \tau_{i}^{Q}\left(e_W^{\{k + i/p\}}\right)
&= \sum_{i=M}^p  \bilinhatenopar{i}{Q}{Q} + \sum_{k=0}^{Q} \sum_{i=1}^{M-1} \tau_{i}^{Q}\left(e_W^{\{k + i/p\}}\right) \\
&- \sum_{k=0}^{Q}  \sum_{i=1}^{M-1} \sum_{j=M}^{p}\bilintilbothnopar{i}{j}{k}{Q} \\
&- \sum_{k=0}^{Q-1} \sum_{i=M+1}^{p}  \sum_{j=M}^{i-1}  \bilintilbothnopar{i}{j}{k}{Q}.
\end{aligned}
\end{equation}
\end{lemma}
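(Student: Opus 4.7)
The plan is to prove the identity by downward induction on $M$, starting from $M = p+1$ (the trivial case) and descending to $M = 1$, using Lemma~\ref{lem:mul_sch_ana_lem_5} as the mechanism that drops the index $M$ by one at each step.

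\textbf{Base case.} At $M = p+1$, the first sum $\sum_{i=M}^{p} a_i(e_D^{\{Q+i/p\}}, \phi^{[Q+i/p]})$ and the two triple sums on the right-hand side are empty (the inner $j$-ranges and outer $i$-ranges are vacuous). The middle term collapses to $\sum_{k=0}^{Q} \sum_{i=1}^{p} \tau_{i}^{Q}(e_W^{\{k + i/p\}})$, which is exactly $I$, so \eqref{eq:mul_sch_lem_6_eq_1} holds.

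\textbf{Inductive step from $M+1$ to $M$.} Assume the identity at $M+1$. From the right-hand side at $M+1$, isolate the two $i = M$ contributions: first, the term $\sum_{k=0}^{Q} \tau_M^{Q}(e_W^{\{k+M/p\}})$ extracted from $\sum_{k=0}^{Q}\sum_{i=1}^{M}\tau_i^{Q}(\cdot)$; second, the terms $-\sum_{k=0}^{Q}\sum_{j=M+1}^{p} a_{Mj}(e_W^{\{k+M/p\}}, \phi^{[Q+j/p]})$ extracted from the triple sum $\sum_{k=0}^{Q}\sum_{i=1}^{M}\sum_{j=M+1}^{p}(\cdot)$. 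Applying Lemma~\ref{lem:mul_sch_ana_lem_5} with $R = M$ (so that its parameter $M$ equals $M+1$) converts the sum of these isolated pieces into
\begin{equation*}
a_M\!\left(e_D^{\{Q+M/p\}}, \phi^{[Q+M/p]}\right)
- \sum_{k=0}^{Q} \sum_{i=1}^{M-1} a_{iM}\!\left(e_W^{\{k+i/p\}}, \phi^{[Q+M/p]}\right)
- \sum_{k=0}^{Q-1} \sum_{i=M+1}^{p} a_{iM}\!\left(e_W^{\{k+i/p\}}, \phi^{[Q+M/p]}\right).
\end{equation*}
Reinserting these three pieces into the right-hand side at level $M+1$: the first piece extends the outer sum $\sum_{i=M+1}^{p} a_i(\cdot)$ to $\sum_{i=M}^{p} a_i(\cdot)$; the second piece supplies the $j = M$ column that upgrades $\sum_{j=M+1}^{p}$ to $\sum_{j=M}^{p}$ in the first triple sum (now over $i \in \{1,\dots,M-1\}$); and the third piece supplies the $j = M$ column (for $i \geq M+1$) that upgrades $\sum_{i=M+2}^{p}\sum_{j=M+1}^{i-1}$ to $\sum_{i=M+1}^{p}\sum_{j=M}^{i-1}$. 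After collecting, this is exactly the statement at $M$.

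\textbf{Main obstacle.} The only real difficulty is index bookkeeping: one must verify that peeling off the $i = M$ row of the first triple sum at level $M+1$ matches precisely the $j = M$ column that Lemma~\ref{lem:mul_sch_ana_lem_5} supplies for level $M$, and that the second triple sum's range expands from $i \geq M+2, j \geq M+1$ to $i \geq M+1, j \geq M$ using exactly the third term produced by the lemma. A quick check shows no $(i,j) = (M,M)$ diagonal terms appear in either triple sum at level $M$ (in the first, $i < M \leq j$; in the second, $j < i$), so no spurious self-overlap arises, and the induction closes cleanly.
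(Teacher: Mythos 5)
Your proof is correct and follows essentially the same route as the paper: a downward induction on $M$ anchored at the trivial case $M=p+1$, with Lemma~\ref{lem:mul_sch_ana_lem_5} used to convert the isolated $\tau$-terms and the corresponding row of the first triple sum into the new bilinear-form term plus the two $j$-columns that extend the triple sums. The only difference is cosmetic — you phrase the step as $M+1\to M$ while the paper writes it as $M\to M-1$ — and your bookkeeping of the index ranges matches the paper's.
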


\begin{proof}
The proof is by induction on $M$.

\medskip
\noindent {\tt [I]} For $M = p+1$ the right-hand side of \eqref{eq:mul_sch_lem_6_eq_1} is simply $I$.

\medskip
\noindent {\tt [II]} Assume that the expression holds for some $2 \leq M \leq p$.

\medskip
\noindent {\tt [III]} To show the result is true for $M=p-1$, we isolate terms involving $e_W^{\{k + (M-1)/p\}}$.

\begin{equation}
\label{eq:mul_sch_lem_6_eq_2}
\begin{aligned}
I
&= \sum_{i=M}^p  \bilinhatenopar{i}{Q}{Q}
 + \sum_{k=0}^{Q} \sum_{i=1}^{M-2} \tau_{i}^{Q} \left(e_W^{\{k + i/p\}}\right)
 - \sum_{k=0}^{Q} \sum_{i=1}^{M-2} \sum_{j=M}^{p}\bilintilbothnopar{i}{j}{k}{Q} \\
&- \sum_{k=0}^{Q-1} \sum_{i=M+1}^{p} \sum_{j=M}^{i-1}  \bilintilbothnopar{i}{j}{k}{Q}
 + \sum_{k=0}^{Q} \tau_{M-1}^{Q}\left(e_W^{\{k + (M-1)/p\}}\right) \\
& - \sum_{k=0}^{Q}  \sum_{j=M}^{p}\bilintilsecondnopar{M-1}{j}{k}{Q}.
\end{aligned}
\end{equation}
From Lemma~\ref{lem:mul_sch_ana_lem_5},
\begin{equation}
\label{eq:mul_sch_lem_6_eq_3}
\begin{aligned}
&\sum_{k=0}^{Q}  \tau_{M-1}^{Q}\left(e_W^{\{k + (M-1)/p\}}\right) - \sum_{k=0}^{Q}  \sum_{j=M}^{p}\bilintilsecondnopar{M-1}{j}{k}{Q} \\
&= \bilinhate{M-1}{Q}{Q}  - \sum_{k=0}^{Q} \sum_{i = 1}^{M-2} \bilintilfirstnopar{i}{M-1}{k}{Q}\\
&- \sum_{k=0}^{Q-1} \sum_{i=M}^{p} \bilintilfirstnopar{i}{M-1}{k}{Q}.
\end{aligned}
\end{equation}
Combining \eqref{eq:mul_sch_lem_6_eq_3} with \eqref{eq:mul_sch_lem_6_eq_2},
\begin{equation*}
\begin{aligned}
I
&= \sum_{i=M-1}^p  \bilinhatenopar{i}{Q}{Q}
 + \sum_{k=0}^{Q} \sum_{i=1}^{M-2} \tau_{i}^{Q} \left(e_W^{\{k + i/p\}}\right)
 - \sum_{k=0}^{Q}  \sum_{i=1}^{M-2} \sum_{j=M-1}^{p}\bilintilbothnopar{i}{j}{k}{Q}\\
&- \sum_{k=0}^{Q-1} \sum_{i=M}^{p}  \sum_{j=M-1}^{i-1}  \bilintilbothnopar{i}{j}{k}{Q}.
\end{aligned}
\end{equation*}
\end{proof}

\begin{corollary}
\label{corr:1}
Let $0 \leq Q < K$. Then we have
\begin{equation}
\label{eq:mul_sch_corr_1_eq_1}
\begin{aligned}
\sum_{k=0}^{Q} \sum_{i=1}^{p} \tau_{i}^{Q}\left(e_W^{\{k + i/p\}}\right)
&= \sum_{i=1}^p  \bilinhatenopar{i}{Q}{Q}
 + \sum_{k=0}^{Q-1} \sum_{i=1}^{p} \tau_{i}^{Q-1}\left(e_W^{\{k + i/p\}}\right).
\end{aligned}
\end{equation}
\end{corollary}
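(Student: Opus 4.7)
The plan is to specialize Lemma \ref{lem:ms_eK_a} to $M=1$ and then identify the surviving residual sum as the $\tau^{Q-1}$-term on the right-hand side of \eqref{eq:mul_sch_corr_1_eq_1}.

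First I would set $M=1$ in \eqref{eq:mul_sch_lem_6_eq_1}. Because $M-1=0$, the two sums $\sum_{k=0}^{Q}\sum_{i=1}^{M-1}\tau_i^Q(\cdot)$ and $\sum_{k=0}^{Q}\sum_{i=1}^{M-1}\sum_{j=M}^{p} a_{ij}(\cdot,\cdot)$ are empty, and the first sum becomes $\sum_{i=1}^{p} a_i(e_D^{\{Q+i/p\}},\phi^{[Q+i/p]})$. The only nontrivial leftover is
\[
\sum_{k=0}^{Q-1}\sum_{i=2}^{p}\sum_{j=1}^{i-1} a_{ij}\!\left(e_W^{\{k+i/p\}},\phi^{[Q+j/p]}\right),
\]
so Lemma \ref{lem:ms_eK_a} yields
\[
\sum_{k=0}^{Q}\sum_{i=1}^{p}\tau_i^{Q}\!\left(e_W^{\{k+i/p\}}\right)
= \sum_{i=1}^{p} a_i\!\left(e_D^{\{Q+i/p\}},\phi^{[Q+i/p]}\right)
- \sum_{k=0}^{Q-1}\sum_{i=2}^{p}\sum_{j=1}^{i-1} a_{ij}\!\left(e_W^{\{k+i/p\}},\phi^{[Q+j/p]}\right).
\]

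Next I would rewrite the double-index sum using the definition \eqref{eq:def_tau_i_Q} of $\tau_i^{Q-1}$. For $Q\geq 1$ (so that $0\leq Q-1 < K-1$ is meaningful), the definition gives $\tau_i^{Q-1}(v)=-\sum_{j=1}^{i-1} a_{ij}(v,\phi^{[Q+j/p]})$; the $i=1$ summand is vacuous, so extending the outer range from $i=2,\dots,p$ to $i=1,\dots,p$ costs nothing. Thus
\[
-\sum_{k=0}^{Q-1}\sum_{i=2}^{p}\sum_{j=1}^{i-1} a_{ij}\!\left(e_W^{\{k+i/p\}},\phi^{[Q+j/p]}\right)
= \sum_{k=0}^{Q-1}\sum_{i=1}^{p}\tau_i^{Q-1}\!\left(e_W^{\{k+i/p\}}\right),
\]
which is exactly the second term on the right of \eqref{eq:mul_sch_corr_1_eq_1}. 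Substituting back gives the claimed identity.

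Finally I would remark on the boundary case $Q=0$: the sum $\sum_{k=0}^{Q-1}(\cdot) = \sum_{k=0}^{-1}(\cdot)$ is empty by convention, matching the empty leftover sum produced by Lemma \ref{lem:ms_eK_a} at $M=1$ for $Q=0$, so no invocation of $\tau_i^{-1}$ is required and the statement remains consistent. The argument is essentially bookkeeping once Lemma \ref{lem:ms_eK_a} is in hand; the only delicate point is recognizing that the ``within iteration'' and ``between iteration'' transfer terms reorganize precisely into the recursive $\tau_i^{Q-1}$ structure, which is exactly what the definition \eqref{eq:def_tau_i_Q} was designed to encode.
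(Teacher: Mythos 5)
Your proof is correct and follows essentially the same route as the paper: specialize Lemma~\ref{lem:ms_eK_a} to $M=1$, note that the two sums over $i\le M-1$ are empty, identify the surviving triple sum with $\sum_i\tau_i^{Q-1}$ via the definition \eqref{eq:def_tau_i_Q}, and extend the outer range to $i=1$ since that summand is vacuous. You also resolve the sign of the $\tau^{Q-1}$ term correctly as a plus --- consistent with the corollary statement and with the telescoping needed later in the proof of Theorem~\ref{thm:multiplicative_discretization_error} --- whereas the paper's own displayed intermediate steps carry a spurious minus sign in front of $\tau_i^{Q-1}$.
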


\begin{proof}
Set $M = 1$ in Lemma~\ref{lem:ms_eK_a} to get,
\begin{equation}
\label{eq:mul_sch_corr_1_eq_2}
\begin{aligned}
\sum_{k=0}^{Q} \sum_{i=1}^{p} \tau_{i}^{Q}\left(e_W^{\{k + i/p\}}\right)
&= \sum_{i=1}^p  \bilinhatenopar{i}{Q}{Q}
 - \sum_{k=0}^{Q-1} \sum_{i=2}^{p}  \sum_{j=1}^{i-1}  \bilintilbothnopar{i}{j}{k}{Q}\\
&= \sum_{i=1}^p  \bilinhatenopar{i}{Q}{Q}
 - \sum_{k=0}^{Q-1} \sum_{i=2}^{p} \tau_{i}^{Q-1}\left(e_W^{\{k + i/p\}}\right)\\
&= \sum_{i=1}^p  \bilinhatenopar{i}{Q}{Q} - \sum_{k=0}^{Q-1} \sum_{i=1}^{p} \tau_{i}^{Q-1}\left(e_W^{\{k + i/p\}}\right),
\end{aligned}
\end{equation}
where we use \eqref{eq:def_tau_i_Q} and note that $\tau_{1}^{Q} (v) = 0$ for $Q < K-1$.
\end{proof}



\subsubsection{Proof of Theorem~\ref{sec:multiplicative_Schwarz_theorem}}
\label{sec:proof_mult_schwarz_disc_err}

\begin{proof}
From Lemma~\ref{lem:multiplicative_disc_err_decomp} and \eqref{eq:def_tau_i_Q},
\begin{equation*}
\left(e_D^{\{K\}},\psi \right) =  \sum_{k=0}^{K-1} \sum_{i=1}^p \sum_{j=1}^p \ \left( e_W^{\{k+i/p\}}, \psi_j\right)_{ij}
= \sum_{k=0}^{K-1} \sum_{i=1}^p  \tau_{i}^{K-1}\left(  e_W^{\{k+i/p\}}\right).
\end{equation*}
Applying Corollary~\ref{corr:1} yields
\begin{equation*}
\left(e^{\{K\}},\psi\right) =  \sum_{i=1}^p \bilinhatenopar{i}{K-1}{K-1} -   \sum_{k=0}^{K-2} \sum_{i=1}^{p} \tau_{i}^{K-2}\left(e_W^{\{k + i/p\}}\right).
\end{equation*}
Repeated application of Corollary~\ref{corr:1} yields
\begin{equation}
\label{eq:main_thm_2nd_last}
\begin{aligned}
\left(e^{\{K\}},\psi\right) =  \sum_{k=0}^{K-1} \sum_{i=1}^{p} \bilinhatenopar{i}{k}{k}.
\end{aligned}
\end{equation}
Now,
\begin{equation}
\label{eq:main_thm_last}
\begin{aligned}
	\bilinhatenopar{i}{k}{k} &= a_i\left(u^{\{k+i/p\}} - U^{\{k+i/p\}}, \phi^{[{k+i/p}]}\right) =  a_i\left(\widetilde{u}^{\{k+i/p\}}, \phi^{[{k+i/p}]}\right) - a_i\left(\widetilde{U}^{\{k+i/p\}}, \phi^{[{k+i/p}]}\right) \\
	&= l_i \left(\phi^{[{k+i/p}]}\right)  - a_i\left(\widetilde{U}^{\{k+i/p\}}, \phi^{[{k+i/p}]}\right) = R_i\left(\widetilde{U}^{\{k+i/p\}},\phi^{[{k+i/p}]}\right).
	\end{aligned}
\end{equation}
Combining \eqref{eq:main_thm_2nd_last} and \eqref{eq:main_thm_last} leads to
\begin{equation}
\label{eq:err_rep_no_gal_ortho}
\left(\psi, u^{\{K\}} - U^{\{K\}}\right) = \sum_{k=0}^{K-1} \sum_{i=1}^p  R_i\left(\widetilde{U}^{\{k+i/p\}}, \phi^{[k+i/p]}\right).
\end{equation}
The discrete equivalent of \eqref{eq:multiplicative_basic_local} is
\begin{equation}
      \label{eq:multiplicative_basic_local_discrete}
      R_i\left(\widetilde{U}^{\{k+i/p\}}, v\right) =  l_i(v) - a_i \left( \widetilde{U}^{\{k+i/p\}},v \right),
            \quad \forall v \in V_{i,h,0}.
    \end{equation}
Substituting $v = \pi_i \phi^{[k+i/p]} \in  V_{i,h,0}$ in \eqref{eq:multiplicative_basic_local_discrete} and subtracting the result from \eqref{eq:err_rep_no_gal_ortho} completes the proof.

\end{proof}

\subsection{Details of analysis of additive Schwarz algorithm}
\label{sec:additive_Schwarz_error_analysis}

\subsubsection{Reformulation of the algorithm}
Similar to the multiplicative case in \S \ref{sec:multiplicative_Schwarz_error_analysis}, the basic additive algorithm \ref{alg:additive_basic} is not amenable to adjoint based analysis since the affine solution space $H^{1}_{D_{k}}(\Omega_i)$ changes at every iteration. We reformulate the algorithm  by again using a standard lifting technique to account for this. We set
\begin{equation}
\label{eq:def_w_add_sch}
\widetilde{u}_i^{\{k+1\}} = w_i^{\{k+1\}}  + u^{\{k\}} \quad \text{ on } \Omega_i,
\end{equation}
where now $w_i^{\{k+1\}} \in H_0^{1}(\Omega_i)$. This results in Algorithm~\ref{alg:additive_reformulated}.


\begin{algorithm}[H]
\caption{Reformulated overlapping additive Schwarz}
\label{alg:additive_reformulated}

\begin{algorithmic}
\State Given $u^{\{0\}}$ defined on $\Omega$
\For{$k=0, 1, 2, \dots, K-1$ }
  \For{$i= 1, 2, \dots, p$ }
    \State Find $w_i^{\{k+1\}} \in H_0^{1}(\Omega_1)$ such that \\
    \begin{equation}
   \label{eq:additive_def_uh}
     a_i \left( w_i^{\{k+1\}},v \right)= l_i(v)- a_i \left(u^{\{k \}},v \right),
         \quad \forall v \in H_0^{1}(\Omega_i).
   \end{equation}
   \State Let
   \begin{equation}
   \label{eq:additive_ukph}
     u^{\{k+1\}} = u^{\{k\}} + \tau \left ( \sum_{i=1}^p  \widetilde{\Pi }_i w_i^{\{k+1\}} \right )
     \;\hbox{where}\quad
     \widetilde{\Pi }_i w_i^{\{k+1\}} = \left \{
       \begin{gathered} \begin{aligned}
         &w_i^{\{k+1\}}, \; &&\hbox{on } \overline{\Omega}_i, \\
         &0,                &&\hbox{on } \Omega \backslash \overline{\Omega}_i.
       \end{aligned} \end{gathered}
       \right .
   \end{equation}
  \EndFor
\EndFor
\end{algorithmic}

\end{algorithm}
%


There is an equivalent reformulation of the discrete Algorithm~\ref{alg:additive_reformulated} and we denote the unknown solutions  as $W^{\{k\}}$  belonging to the spaces $V_{i,h,0} \subset H_0^1(\Omega_i)$. These solutions are defined formally but are not computed in practice. Equation \eqref{eq:additive_ukph}, which shows that $u^{\{k+1\}}$ is a weighted sum of all previous solutions to \eqref{eq:additive_def_uh}, results in very different adjoint problems for additive Schwarz (equations \eqref{eq:additive_adjoints}) from those for multiplicative Schwarz (equations \eqref{eq:multiplicative_adj} and \eqref{eq:def_tau_i_Q}).

To distinguish between different solutions (true, analytical, discrete) we  use the notation in Table~\ref{tab:additive_Schwarz_spaces}.
\begin{table}[H]
\centering
\begin{tabular}{||c|c|c|l||}
\hline
Notation &  Formula       &Space & Meaning \\
\hline
$u$                        && $H^1_0(\Omega)$              & True solution \\
$u^{\{k\}}$                && $H^{1}_{0}(\Omega_i)$        & Global analytic solution at iteration $k$ \\
${U}^{\{k\}}$              && $V_{h}$                      & Global discrete solution at iteration $k$ \\
\hline
$\widetilde{u}_i^{\{k\}}$  && $H^{1}_{D_{k}}(\Omega_i)$    & Analytic solutions on $\Omega_i$ at iteration $k$ \\
$\widetilde{U}_i^{\{k\}}$  && $V_{i,h}^{k}$                & Discrete solutions on $\Omega_i$ at iteration $k$ \\
\hline
$w^{\{k\}}_i$              && $H^{1}_{0}(\Omega_i)$        & Analytic solns on $\Omega_i$ with homogen. bcs at iteration $k$ \\
$W^{\{k\}}$                && $V_{i,h,0}$                  & Discrete solns on $\Omega_i$ with homogen. bcs at iteration $k$ \\
\hline
$e^{\{k\}}$         &$u-U^{\{k\}}$                    & $H^{1}_{0}(\Omega)$      & Total error \\
$e_I^{\{k\}}$       &$u-u^{\{k\}}$                    & $H^{1}_{0}(\Omega)$      & Global iteration error at iteration $k$  \\
$e_D^{\{k\}}$       &$u^{\{k\}}-U^{\{k\}}$            & $H^{1}_{0}(\Omega)$      & Global discretization error at iteration $k$ \\
\hline
$e_{W,i}^{\{k\} }$  &$w_i^{\{k\}}-W_i^{\{k\}}$        & $H^{1}_{0}(\Omega_i)$    & Discretization error on $\Omega_i$ with homogen. bcs at it. $k$ \\
\hline
\end{tabular}
\caption{Additive Schwarz: notation for different solutions and their spaces. }
\label{tab:additive_Schwarz_spaces}
\end{table}

\subsubsection{Technical lemmas}
\label{sec:additive_Schwarz_lemmas}

Let $e_W^{\{k\}} = w^{\{k\}} - W^{\{k\}}$. By \eqref{eq:additive_ukph} we have
\begin{equation}
\label{eq:additive_err_tilde_hat_rel}
\begin{aligned}
e_D^{\{k\}} &= e_D^{\{k-1\}} + \tau \sum_{i=1}^p  \widetilde{\Pi}_i e_{W,i}^{\{k\}}.
\end{aligned}
\end{equation}
%
%
We apply lemma \ref{lem:partition of unity} to arrive at
\begin{equation}
\label{eq:additive_err_pou_decomp}
\left(e_D^{\{k\}},\psi\right) =  \left(e_D^{\{k\}},\sum_{i=1}^p \chi_i \psi \right)  =\sum_{i=1}^p \left(e_D^{\{k\}},\psi_i\right)_{ii}.
\end{equation}



\begin{lemma}[Error in QoI in terms of discretization errors with homogeneous bcs]
\label{lem:additive_disc_err_decomp}

The discretization error in the QoI is
\begin{equation*}
\left(e_D^K,\psi\right) = \tau \ \sum_{k=1}^{K} \sum_{i=1}^p \sum_{j=1}^p \ \left( e_{W,i}^{\{k\}}, \psi_j \right)_{ij}.
\end{equation*}

\end{lemma}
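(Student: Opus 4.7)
The plan is to prove the additive-Schwarz analog of Lemma~\ref{lem:multiplicative_disc_err_decomp} by telescoping the recursion \eqref{eq:additive_err_tilde_hat_rel} for $e_D^{\{k\}}$ down to the initial condition $e_D^{\{0\}}=0$, and then localizing the resulting $L_2$ inner product against $\psi$ by means of the partition-of-unity identity established in Lemma~\ref{lem:partition of unity}. The structure mirrors the multiplicative case, but is actually simpler because the additive update \eqref{eq:additive_ukph} already gives $e_D^{\{k\}}$ as an \emph{explicit} sum over subdomains at each iteration, so no secondary recursion over $i$ is needed.

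First I would iterate \eqref{eq:additive_err_tilde_hat_rel} backwards in $k$, writing
\begin{equation*}
e_D^{\{K\}} = e_D^{\{K-1\}} + \tau \sum_{i=1}^p \widetilde{\Pi}_i e_{W,i}^{\{K\}}
= e_D^{\{K-2\}} + \tau \sum_{i=1}^p \widetilde{\Pi}_i e_{W,i}^{\{K\}} + \tau \sum_{i=1}^p \widetilde{\Pi}_i e_{W,i}^{\{K-1\}},
\end{equation*}
and continuing until the index reaches $0$. Since $e_D^{\{0\}}=0$ by construction, this telescoping produces
\begin{equation*}
e_D^{\{K\}} = \tau \sum_{k=1}^{K} \sum_{i=1}^p \widetilde{\Pi}_i e_{W,i}^{\{k\}}.
\end{equation*}

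Next I would take the $L_2(\Omega)$ inner product with $\psi$ and invoke Lemma~\ref{lem:partition of unity} (rewritten here as $(\cdot,\psi) = \sum_j (\cdot,\psi_j)_{jj}$ by partition of unity, keeping in mind that $\psi_j$ vanishes outside $\Omega_j$). This yields
\begin{equation*}
(e_D^{\{K\}}, \psi) = \tau \sum_{k=1}^K \sum_{i=1}^p \sum_{j=1}^p \bigl( \widetilde{\Pi}_i e_{W,i}^{\{k\}}, \psi_j \bigr)_{jj}.
\end{equation*}
The final step is to observe that, by the definition of $\widetilde{\Pi}_i$ in \eqref{eq:additive_ukph}, the function $\widetilde{\Pi}_i e_{W,i}^{\{k\}}$ equals $e_{W,i}^{\{k\}}$ on $\overline{\Omega}_i$ and is zero on $\Omega\setminus\overline{\Omega}_i$, while $\psi_j$ vanishes outside $\Omega_j$; therefore the integrand is supported in $\Omega_i \cap \Omega_j$, so the inner product over $\Omega_j$ collapses to the one over $\Omega_i\cap\Omega_j$, i.e., $(e_{W,i}^{\{k\}}, \psi_j)_{ij}$. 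Substituting gives exactly the claimed identity.

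The steps are all essentially bookkeeping; the only care needed is in the last step, where one must correctly identify that the support restriction from $\widetilde{\Pi}_i$ combined with the support restriction from $\psi_j$ reduces the $L_2(\Omega_j)$ pairing to the $L_2(\Omega_i\cap\Omega_j)$ pairing. This is the one place where the asymmetric roles of the outer index $j$ (from the partition of unity) and the inner index $i$ (from the subdomain sum in the update) get unified, producing the double sum over $i$ and $j$ in the conclusion. Nothing here requires an adjoint identity or Galerkin orthogonality; those will only enter in the subsequent lemmas that rewrite $(e_{W,i}^{\{k\}},\psi_j)_{ij}$ using the adjoint problems \eqref{eq:additive_adjoints}.
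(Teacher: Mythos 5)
Your proof is correct and follows essentially the same route as the paper's: both rest on telescoping the recursion \eqref{eq:additive_err_tilde_hat_rel} down to $e_D^{\{0\}}=0$ and localizing via the partition of unity, with the support of $\widetilde{\Pi}_i e_{W,i}^{\{k\}}$ collapsing each pairing to $(\,\cdot\,,\psi_j)_{ij}$. The only (immaterial) difference is ordering: the paper fixes $j$ and unrolls $(e_D^{\{k\}},\psi_j)_{jj}$ in $k$ before summing over $j$, whereas you unroll $e_D^{\{K\}}$ itself first and then apply the partition of unity.
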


\begin{proof}
Using \eqref{eq:additive_err_tilde_hat_rel}, we have for a fixed $j$
\begin{equation*}
\left(e_D^{\{K\}}, \psi_j\right)_{jj} = \left(e_D^{\{K-1\}}, \psi_j\right)_{jj} + \tau \sum_{i=1}^p \left( e_{W,i}^{\{K\}}, \psi_j \right)_{ij},
\end{equation*}

This is a recursive relation involving $\hat e^{\{K\}}$. Unrolling the recursion leads to
\begin{equation*}
\begin{aligned}
\left(e_D^{\{K\}}, \psi_j\right)_{jj}&= \tau \ \sum_{k=1}^K \sum_{i=1}^p \left( e_{W,i}^{\{k\}}, \psi_j \right)_{ij}.
\end{aligned}
\end{equation*}
Summing over all $j = 1, \dots, p$ and using \eqref{eq:additive_err_pou_decomp},
\begin{equation*}
\begin{aligned}
\left(\hat e^{\{K\}}, \psi\right)
&= \tau \ \sum_{k=1}^K \sum_{i=1}^p \sum_{j=1}^p \ \left(  e_{W,i}^{\{k\}}, \psi_j \right)_{ij}.                         .
\end{aligned}
\end{equation*}
\end{proof}


\begin{lemma} [Bilinear form with global discretization errors]
\label{lem:additive_disc_err_unrolling}
For any $v \in V_i$ we have
\begin{equation*}
  a_i\left(e_D^{\{k\}},v\right) = \tau \sum_{m = 1}^{k}\sum_{j=1}^p a_{i j}\left(e_{W,j}^{\{m\}},v\right).
\end{equation*}
\end{lemma}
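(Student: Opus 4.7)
The plan is to prove this identity by unrolling the recursion \eqref{eq:additive_err_tilde_hat_rel} from the initial condition $e_D^{\{0\}} = 0$, and then applying the bilinear form $a_i(\cdot,v)$ term-by-term, carefully tracking how the subdomain extension operator $\widetilde{\Pi}_j$ interacts with the restriction of the bilinear form to $\Omega_i$.

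First I would observe that the recursion \eqref{eq:additive_err_tilde_hat_rel}, together with $e_D^{\{0\}} = 0$, unrolls telescopically to
\begin{equation*}
e_D^{\{k\}} = \tau \sum_{m=1}^{k} \sum_{j=1}^{p} \widetilde{\Pi}_j e_{W,j}^{\{m\}}.
\end{equation*}
This is the additive analog of the recursive expansion used in the multiplicative case, but simpler because the update at iteration $m$ depends only on $u^{\{m-1\}}$, so there is no sequential coupling within an iteration and the sum is symmetric over $j$.

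Next I would apply $a_i(\cdot, v)$ to both sides for $v \in V_i$. By bilinearity,
\begin{equation*}
a_i\bigl(e_D^{\{k\}}, v\bigr) = \tau \sum_{m=1}^{k} \sum_{j=1}^{p} a_i\bigl(\widetilde{\Pi}_j e_{W,j}^{\{m\}}, v\bigr).
\end{equation*}
The key step is to identify $a_i(\widetilde{\Pi}_j e_{W,j}^{\{m\}}, v) = a_{ij}(e_{W,j}^{\{m\}}, v)$. This follows from the definition of $\widetilde{\Pi}_j$ in \eqref{eq:additive_ukph}: $\widetilde{\Pi}_j e_{W,j}^{\{m\}}$ equals $e_{W,j}^{\{m\}}$ on $\overline{\Omega}_j$ and vanishes on $\Omega \setminus \overline{\Omega}_j$. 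Since $a_i$ denotes the restriction of the bilinear form $a$ to $\Omega_i$, and the integrand is supported where both functions are nonzero, the effective region of integration is $\Omega_i \cap \Omega_j$, which is precisely the domain of $a_{ij}$.

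The only subtle point, and the step I would take the most care with, is the identification $a_i(\widetilde{\Pi}_j e_{W,j}^{\{m\}}, v) = a_{ij}(e_{W,j}^{\{m\}}, v)$ in the case $i = j$, where $\Omega_i \cap \Omega_j = \Omega_i$ so the formula reduces to $a_i(e_{W,i}^{\{m\}}, v)$; this is consistent with the convention that $a_{ii} = a_i$. Substituting this identification into the displayed sum yields the claimed formula, completing the proof.
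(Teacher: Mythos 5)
Your proof is correct and follows essentially the same route as the paper: both arguments rest on unrolling the recursion \eqref{eq:additive_err_tilde_hat_rel} from $e_D^{\{0\}}=0$ and on the observation that $\widetilde{\Pi}_j e_{W,j}^{\{m\}}$ is supported in $\overline{\Omega}_j$, so that $a_i\bigl(\widetilde{\Pi}_j e_{W,j}^{\{m\}},v\bigr)=a_{ij}\bigl(e_{W,j}^{\{m\}},v\bigr)$; the only difference is that you unroll first and then apply $a_i(\cdot,v)$, whereas the paper applies $a_i(\cdot,v)$ to the one-step recursion and then unrolls. Your explicit attention to the support argument (valid since $e_{W,j}^{\{m\}}\in H^1_0(\Omega_j)$, so extension by zero stays in $H^1$) and to the case $i=j$ is a welcome elaboration of details the paper leaves implicit.
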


\begin{proof}
By \eqref{eq:additive_err_tilde_hat_rel}, we have
\begin{equation*}
\begin{aligned}
a_i\left( e_D^{\{k\}},v\right)
&= a_i\left( e_D^{\{k-1\}},v\right) + \tau \sum_{j=1}^p a_{i j}\left(e_{W,j}^{\{k\}},v\right) \\
&= a_i\left( e_D^{\{k-1\}},v\right) + \tau \sum_{j=1}^p a_{i j}\left(e_{W,j}^{\{k\}},v\right) ,
\end{aligned}
\end{equation*}
since $e_W$ is the identity on subdomain $j$. This is a recursive relation involving $a_i\left( e_D^{\{k\}},v\right)$. Unrolling this recursion and using the fact that $e_D^{\{0\}} = 0$ proves the result.

\end{proof}






\begin{lemma}[Bilinear form with local discretization errors with homogeneous bcs]
\label{lem:add_sch_err_ana_1}
\begin{equation*}
  a_i\left(e_{W,i}^{\{k\}}, \phi_i^{[k]}\right)
  = R_i\left(\widetilde{U}_i^{\{k\}},  \phi_i^{[k]}\right) - \tau \sum_{m = 1}^{k-1}\sum_{j=1}^p a_{i j}\left(e_{W,j}^{\{m\}},\phi_i^{[k]}\right).
\end{equation*}
\end{lemma}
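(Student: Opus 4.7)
The plan is to expand $a_i(e_{W,i}^{\{k\}}, \phi_i^{[k]})$ using the defining equations for $w_i^{\{k\}}$ and its discrete counterpart $W_i^{\{k\}}$, recognize the unwanted cross term as $a_i(e_D^{\{k-1\}}, \phi_i^{[k]})$, and then use Lemma~\ref{lem:additive_disc_err_unrolling} to express this in terms of the local homogeneous-BC errors $e_{W,j}^{\{m\}}$ for $m \leq k-1$.

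First, I would write $a_i(e_{W,i}^{\{k\}}, \phi_i^{[k]}) = a_i(w_i^{\{k\}}, \phi_i^{[k]}) - a_i(W_i^{\{k\}}, \phi_i^{[k]})$. Since $\phi_i^{[k]} \in H^1_0(\Omega_i)$, the continuous equation \eqref{eq:additive_def_uh} (with index $k$ in place of $k+1$) gives $a_i(w_i^{\{k\}}, \phi_i^{[k]}) = l_i(\phi_i^{[k]}) - a_i(u^{\{k-1\}}, \phi_i^{[k]})$. For the discrete term I would use the discrete analog of the lifting \eqref{eq:def_w_add_sch}, namely $\widetilde{U}_i^{\{k\}} = W_i^{\{k\}} + U^{\{k-1\}}$, to rewrite $a_i(W_i^{\{k\}}, \phi_i^{[k]}) = a_i(\widetilde{U}_i^{\{k\}}, \phi_i^{[k]}) - a_i(U^{\{k-1\}}, \phi_i^{[k]})$. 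Note that $\pi_i \phi_i^{[k]}$ is \emph{not} being substituted here, since Galerkin orthogonality will be invoked only later; at this stage $\phi_i^{[k]}$ appears in the exact bilinear form.

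Combining the two displays, the $l_i(\phi_i^{[k]}) - a_i(\widetilde{U}_i^{\{k\}}, \phi_i^{[k]})$ terms collapse to the residual $R_i(\widetilde{U}_i^{\{k\}}, \phi_i^{[k]})$ as defined in \eqref{eq:weak_resid_subdomain}, while the remaining terms collapse to $-a_i(u^{\{k-1\}} - U^{\{k-1\}}, \phi_i^{[k]}) = -a_i(e_D^{\{k-1\}}, \phi_i^{[k]})$. This yields the intermediate identity
\begin{equation*}
a_i\!\left(e_{W,i}^{\{k\}}, \phi_i^{[k]}\right) = R_i\!\left(\widetilde{U}_i^{\{k\}}, \phi_i^{[k]}\right) - a_i\!\left(e_D^{\{k-1\}}, \phi_i^{[k]}\right).
\end{equation*}

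Finally, I would apply Lemma~\ref{lem:additive_disc_err_unrolling} with $v = \phi_i^{[k]}$ at iteration index $k-1$ to obtain $a_i(e_D^{\{k-1\}}, \phi_i^{[k]}) = \tau \sum_{m=1}^{k-1} \sum_{j=1}^{p} a_{ij}(e_{W,j}^{\{m\}}, \phi_i^{[k]})$, and substitute to conclude. The only mildly subtle point—and the place most likely to trip one up—is keeping straight which quantities live where: $\phi_i^{[k]}$ is supported on $\overline{\Omega}_i$ but tested against the \emph{global} error $e_D^{\{k-1\}}$ inside $a_i$, while each $e_{W,j}^{\{m\}}$ is supported only on $\overline{\Omega}_j$, so the bilinear forms on the right-hand side naturally restrict to $a_{ij}$ on $\Omega_i \cap \Omega_j$. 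The discrete lifting identity $\widetilde{U}_i^{\{k\}} = W_i^{\{k\}} + U^{\{k-1\}}$ is the only ingredient not proved explicitly earlier, but it is the direct discrete transcription of \eqref{eq:def_w_add_sch} and may be taken as part of the reformulation.
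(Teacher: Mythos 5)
Your proposal is correct and follows essentially the same route as the paper: both reduce $a_i\left(e_{W,i}^{\{k\}}, \phi_i^{[k]}\right)$ to the intermediate identity $R_i\left(\widetilde{U}_i^{\{k\}}, \phi_i^{[k]}\right) - a_i\left(e_D^{\{k-1\}}, \phi_i^{[k]}\right)$ via the continuous and discrete lifting relations, and then invoke Lemma~\ref{lem:additive_disc_err_unrolling} to unroll the last term. The only cosmetic difference is that you apply \eqref{eq:additive_def_uh} directly to the continuous term while the paper adds and subtracts $u^{\{k-1\}}$ and $U^{\{k-1\}}$ before recognizing the residual; the substance is identical.
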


\begin{proof}

By definition of $e_{W,i}^{\{k\}}$,
\begin{equation*}
\begin{aligned}
a_i\left(e_{W,i}^{\{k\}}, \phi_i^{[k]}\right)  &=
a_i\left(w_i^{\{k\}}, \phi_i^{[k]}\right) - a_i\left(W_i^{\{k\}}, \phi_i^{[k]}\right)\\
&= a_i\left(w_i^{\{k\}} + u_i^{\{k-1\}}, \phi_i^{[k]}\right) - a_i\left(u_i^{\{k-1\}}, \phi_i^{[k]}\right) -
a_i\left(W_i^{\{k\}} + U_i^{\{k-1\}}, \phi_i^{[k]}\right) + a_i\left(U_i^{\{k-1\}}, \phi_i^{[k]}\right).
\end{aligned}
\end{equation*}
Using \eqref{eq:def_w_add_sch} followed by \eqref{eq:additive_def_uh} and definition of $e_D^{\{k\}}$,
\begin{equation*}
\begin{aligned}
a_i\left(e_{W,i}^{\{k\}}, \phi_i^{[k]}\right)  &=
a_i\left( \widetilde{u}_i^{\{k\}}, \phi_i^{[k]}\right) - a_i\left(u_i^{\{k-1\}}, \phi_i^{[k]}\right) -
a_i\left(\widetilde{U}_i^{\{k\}}, \phi_i^{[k]}\right) + a_i\left(U_i^{\{k-1\}}, \phi_i^{[k]}\right)\\
&= R_i\left(\widetilde{U}_i^{\{k\}},\phi_i^{[k]}\right)
 - a_i\left(e_D^{\{k-1\}}, \phi_i^{[k]}\right).
\end{aligned}
\end{equation*}
By Lemma~\ref{lem:additive_disc_err_unrolling},
\begin{equation*}
\begin{aligned}
a_i\left(e_{W,i}^{\{k\}}, \phi_i^{[k]}\right)  &=
R_i\left(\widetilde{U}_i^{\{k\}},\phi_i^{[k]}\right)
 - \tau \sum_{m = 1}^{k-1}\sum_{j=1}^p a_{i j}\left(e_{W,j}^{\{m\}},\phi_i^{[k]}\right).
\end{aligned}
\end{equation*}

\end{proof}





\subsubsection{Proof of Theorem \ref{thm:additive_discretization_error}}

\begin{proof}
By \eqref{eq:additive_adjoints},
\begin{equation*}
\left(\psi, e_D^{\{K\}}\right)
=  \tau \sum_{k=1}^{K} \sum_{i = i}^p \sum_{j = 1}^p  \left(\psi_j,e_{W,i}^{\{k\}}\right)_{ij}
= \sum_{k=1}^{K} \sum_{i=1}^p
\left\{
a_{i}\left(e_{W,i}^{\{k\}}, \phi_i^{[k]}\right)
    + \tau \sum_{j=1}^p \sum_{l=k+1}^K a_{ij}\left(e_{W,i}^{\{k\}},  \phi_j^{[l]} \right\}
\right].
\end{equation*}
By Lemma~\ref{lem:add_sch_err_ana_1},
\begin{equation*}
\left(\psi, e_D^{\{K\}}\right)
= \sum_{k=1}^{K}  \sum_{i=1}^p
\left\{
R_i\left(\widetilde{U}_i^{\{k\}}, \phi_i^{[k]}\right)
- \tau \sum_{m = 1}^{k-1}\sum_{j=1}^p
    a_{i j}\left(e_{W,j}^{\{m\}},\phi_i^{[k]}\right)
      + \tau \sum_{j=1}^p \sum_{l=k+1}^K a_{ij} \left(e_{W,i}^{\{k\}},  \phi_j^{[l]} \right)
\right\}.
\end{equation*}
Application of Galerkin orthogonality, similar to its use in the proof in \S \ref{sec:proof_mult_schwarz_disc_err}, leads to
\begin{equation*}
\left(\psi, e_D^{\{K\}}\right)
= \sum_{k=1}^{K}  \sum_{i=1}^p
\left\{
R_i\left(\widetilde{U}_i^{\{k\}}, \phi_i^{[k]} - \pi_i \phi_i^{[k]}\right)
- \tau \sum_{m = 1}^{k-1}\sum_{j=1}^p
    a_{i j}\left(e_{W,j}^{\{m\}},\phi_i^{[k]}\right)
      + \tau \sum_{j=1}^p \sum_{l=k+1}^K a_{ij} \left(e_{W,i}^{\{k\}},  \phi_j^{[l]} \right)
\right\}.
\end{equation*}
The result follows if
\begin{equation*}
\sum_{k=1}^{K}  \sum_{j = 1}^p \sum_{m = 1}^{k-1}\sum_{i=1}^p a_{i j}\left(e_{W,i}^{\{m\}},\phi_j^{[k]}\right)
= \sum_{k=1}^{K}  \sum_{i = i}^p \sum_{j = 1}^p \sum_{l=k+1}^K a_{ij}\left(e_{W,i}^{\{k\}},  \phi_j^{[l]} \right),
\end{equation*}
where we interchanged the $i$ and $j$ loop indices on the left hand side. This follows if
\begin{equation}
\label{eq:two_sums}
  \sum_{k=1}^{K} \sum_{m=1}^{k-1}  a_{i j}\left(e_{W,i}^{\{m\}},\phi_j^{[k]}\right)
= \sum_{k=1}^{K} \sum_{l=k+1}^K a_{ij} \left(e_{W,i}^{\{k\}},  \phi_j^{[l]} \right).
\end{equation}
To see why this is true, let $A$ be a $K \times K$ strictly lower triangular matrix where the non-zero entries are given by $A_{k,m} = a_{i j}\left(e_{W,i}^{\{m\}},\phi_j^{[k]}\right) $ for $m < k$. Then the left hand side of \eqref{eq:two_sums} is the sum of the entries of $A$ by first summing each row while the right hand side of \eqref{eq:two_sums} is the sum of the entries of $A$ by first summing each column.


\end{proof}

\section{Conclusions and future directions}
\label{sec:discussion}

We develop an adjoint based \emph{a posteriori} error analysis to evaluate the  discretization and iteration errors for a given quantity of interest when solving boundary value problems using overlapping domain decomposition employing either multiplicative or additive Schwarz iteration. The additional expense of formulating and solving the necessary sequence of adjoint problems both recommends and enables a two stage approach to constructing efficient solution strategies. In this approach, a ``stage 1'' solution is computed on a relatively coarse discretization employing a small number of iterations or small overlap between subdomains. The error in the quantity of interest is determined for the stage 1 solution and the balance of discretization and iteration errors, and the distribution of discretization error between subdomains, is determined. These guide the solution strategy for a more accurate ``stage 2'' solution in terms of the localized refinement of the finite element mesh and the choices of overlap and number of iterations.

The adjoint based analysis in this article has focused exclusively on linear problems. Adjoint based analysis can be extended to nonlinear problems, see \cite{marchuk1, estep2009error}.
%
A consideration of nonlinear problems is therefore an obvious and relatively immediate extension of this work.

A more serious extension is to address initial boundary value problems. In combination with earlier work on parallel methods for initial value problems \cite{chaudhry2016posteriori}, the current analysis should enable the development of an \emph{a posteriori} analysis for a numerical method that is parallel in both space and time. Such an analysis would again enable an efficient two stage solution approach, using the distribution of various sources of error estimated from an initial coarse solution to inform the discretization choices for a second ``production'' computation.


\section*{Acknowledgments}
J. Chaudhry’s work is supported by the NSF-DMS 1720402.
S. Tavener's work is supported by NSF-DMS 1720473.
D. Estep's work is supported by NSF-DMS 1720473.

\bibliographystyle{plain}
\bibliography{refs_schwarz_analysis}


\end{document}